\documentclass[12pt,a4paper]{amsart}
\usepackage{times}
\usepackage{amssymb}
\usepackage{mathtools}
\usepackage{amsmath}
\usepackage{amsthm}
\usepackage{rotating}
\usepackage{enumitem}
\usepackage[all]{xy}
\usepackage{tikz}
\usetikzlibrary{calc}
\usepgflibrary{shapes.geometric}
\usepgflibrary{shapes.misc}
\usetikzlibrary{positioning}
\usetikzlibrary{decorations}
\usetikzlibrary{decorations.pathreplacing}
\usetikzlibrary{shapes,snakes}
\usetikzlibrary{arrows}
\usepackage{url}
\usepackage[T1]{fontenc}
\usepackage[english]{babel}
\usepackage[top=1in,bottom=1in,right=1in,left=1in]{geometry}

\usepackage[colorinlistoftodos]{todonotes}
\usepackage{xcolor}

\definecolor{sira}{HTML}{ED9090}

\usepackage{hyperref}

\newtheorem{lemma}{Lemma}[section]
\newtheorem{proposition}[lemma]{Proposition}
\newtheorem{corollary}[lemma]{Corollary}
\newtheorem{theorem}[lemma]{Theorem}
\newtheorem*{theorem*}{Theorem}

\theoremstyle{definition}
\newtheorem{definition}[lemma]{Definition}
\newtheorem{example}[lemma]{Example}

\newtheorem*{notation*}{Notation}

\theoremstyle{remark}
\newtheorem{remark}[lemma]{Remark}


\newcommand{\supp}{\operatorname{Supp}\nolimits}

\newcommand{\im}{\operatorname{im}\nolimits}

\newcommand{\id}{\operatorname{id}\nolimits}

\newcommand{\var}{\mathbf{var}}
\newcommand{\ex}{\mathbf{ex}}
\newcommand{\inv}{\mathbf{inv}}

\newcommand{\ZZ}{\mathbb{Z}}

\newcommand{\J}{\mathcal{J}}

\newcommand{\CC}{\mathcal{C}}

\newcommand{\integ}{\mathbb{Z}}
\newcommand{\cross}{\times}
\newcommand{\curly}[1]{\mathcal{#1}}
\newcommand{\lgen}{\ensuremath \mathopen{<}} 
\newcommand{\rgen}{\ensuremath \mathclose{>}}
\newcommand{\union}{\cup}

\newcommand{\cl}{\operatorname{cl}\nolimits}
\newcommand{\Clus}{\operatorname{\textbf{\textup{Clus}}}\nolimits}
\newcommand{\disjointunion}{\sqcup}
\newcommand{\bigdisjointunion}{\bigsqcup}
\newcommand{\bigtensor}{\bigotimes}
\newcommand{\bigdsum}{\bigoplus}
\newcommand{\iso}{\cong}
\newcommand{\Gr}{\operatorname{Gr}\nolimits}

\title{Graded quantum cluster algebras of infinite rank as colimits} 
\author{Jan E. Grabowski}
\address{Department of Mathematics and Statistics, Lancaster University, Lancaster, LA1 4YF, United Kingdom}
\email{j.grabowski@lancaster.ac.uk}
\urladdr{http://www.maths.lancs.ac.uk/$\sim$grabowsj/}
\author{Sira Gratz}
\address{Institut f\"{u}r Algebra, Zahlentheorie und Diskrete Mathematik, Leibniz Universit\"{a}t Hannover, Welfengarten 1, 30167 Hannover, Germany}
\email{gratz@math.uni-hannover.de}
\date{14th October 2015}

\begin{document}

\begin{abstract}
We provide a graded and quantum version of the category of rooted cluster algebras introduced by Assem, Dupont and Schiffler and show that every graded quantum cluster algebra of infinite rank can be written as a colimit of graded quantum cluster algebras of finite rank.

As an application, for each $k$ we construct a graded quantum infinite Grassmannian admitting a cluster algebra structure, extending an earlier construction of the authors for $k=2$.
\end{abstract}

\maketitle

\section{Introduction}

Cluster algebras, as introduced by Fomin and Zelevinsky (\cite{FZ-CA1}), are certain commutative rings with a combinatorial structure on a distinguished set of generators, which can be grouped into overlapping sets of a given cardinality, called clusters. The original definition requires clusters to be finite but lately, for a variety of reasons, it has become important to allow infinite clusters, that is, cluster algebras of infinite rank.

Much of the motivation for considering infinite rank cluster algebras comes from representation theory: various types of categorifications of cluster algebras have been studied and these categories often naturally have structures corresponding to infinite clusters.  Examples include the additive categorifications due to Igusa and Todorov (\cite{IT:continuous},\cite{IT:cyclic}) and the monoidal categorifications of Hernandez and Leclerc (\cite{HL}).  In earlier work, we considered a category whose infinite rank cluster structure has been studied by Holm and J\o rgensen (\cite{HJ}) and showed that their category is related to a family of cluster algebras of infinite rank that are themselves related to the infinite Grassmannians of (2-)planes in a space of countably infinite dimension (\cite{Grabowski-Gratz}).  Other related work includes \cite{Gorsky} and \cite{Ndoune}.

Cluster algebras have also been generalised in other ways.  The notion of a graded cluster algebra has been used by various authors, more or less explicitly; a systematic exposition and exploration of some general properties may be found in \cite{GradedClusterAlgebras} by the first author.  We note that graded cluster algebras are particularly important when studying cluster algebra structures on the (multi-)homogeneous coordinate rings of projective varieties, as one naturally expects.

Another important generalisation concerns the quantization of cluster algebras, that is, a noncommutative version of the theory.  This was done by Berenstein and Zelevinsky (\cite{BZ-QCA}) and has been followed by work of a large number of authors demonstrating that many families of quantizations of coordinate rings of varieties have quantum cluster algebra structures.

In this work, our main aim is to bring these three parts of cluster algebra theory together, to study graded quantum cluster algebras of infinite rank.

More specifically, Assem, Dupont and Schiffler (\cite{ADS}) have introduced a category $\Clus$ of rooted cluster algebras. The objects of $\Clus$ are pairs consisting of a cluster algebra and a fixed initial seed. Fixing a distinguished initial seed allows for the definition of natural maps between cluster algebras, so-called rooted cluster morphisms, which are ring homomorphisms commuting with mutation and which provide the morphisms for the category $\Clus$.

In the first part of this paper, we will introduce a corresponding category for the graded quantum case.  We can loosely characterise the difference from the classical case as follows: there are more objects (since there are many choices of grading and/or quantization) but fewer morphisms (since the morphisms are required to preserve the extra structure), but the properties of the category are largely unchanged.

The main technical issues concern the definition of a rooted cluster morphism.  By following the lead given by the classical case, we obtain a natural definition but the intrinsic rigidity of the noncommutative setting means that it is in fact very constraining.  We ask that morphisms are algebra homomorphisms and so the quasi-commutation relations between variables in the same cluster must be preserved exactly.

In the second part, we generalise results of the second author (\cite{Gratz-Colimits}) on colimits.  In the classical case, one may show that the category $\Clus$ is neither complete nor cocomplete, that is, limits and colimits do not in general exist. However, it has sufficient colimits to express any cluster algebra of infinite rank as a colimit of cluster algebras of finite rank.  Here, we prove the latter result for graded quantum cluster algebras, our main theorem:

\begin{theorem}
Every graded quantum rooted cluster algebra of infinite rank can be written as a colimit of graded quantum rooted cluster algebras of finite rank.
\end{theorem}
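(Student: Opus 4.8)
The plan is to mirror the construction used by the second author in the classical setting (\cite{Gratz-Colimits}), carrying the grading and the quantization through every step. Fix a graded quantum rooted cluster algebra $\mathcal{A}$ of infinite rank whose initial seed has cluster $\tilde{\mathbf{x}}$ indexed by a countable set $E$, exchangeable indices $\ex \subseteq E$, exchange matrix $\tilde{B}$ (row- and column-finite, as is assumed throughout for the infinite-rank theory), quasi-commutation matrix $L$ with $(L,\tilde{B})$ compatible, i.e.\ $\tilde{B}^{T}L = (D \mid 0)$, and grading matrix $G$ with $G\tilde{B}=0$. For $v \in E$ let $N(v)$ be the finite set of indices $k$ with $b_{kv}\neq 0$; for a finite subset $V \subseteq E$ put $\ex_V = \{\, v \in V\cap\ex : N(v) \subseteq V\,\}$ and let $\Sigma_V$ be the finite full subseed on $V$, with cluster $\tilde{\mathbf{x}}|_V$, exchangeable indices $\ex_V$, the remaining indices of $V$ frozen, and exchange, quasi-commutation and grading data the restrictions of $\tilde{B}$, $L$, $G$ to $V$. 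Order the finite subsets of $E$ by inclusion; this poset is directed.

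First I would check that each $\Sigma_V$ is a bona fide graded quantum seed of finite rank. The point is that row- and column-finiteness of $\tilde{B}$ forces every relevant identity to be supported on finitely many indices: for $j \in \ex_V$ the nonzero entries $b_{kj}$ occur only for $k \in N(j) \subseteq V$, so the $\ex_V$-columns of the identities $\tilde{B}^{T}L=(D\mid 0)$ and $G\tilde{B}=0$ survive restriction to $V$. Hence $(L|_V, \tilde{B}|_V)$ is compatible and $G|_V$ is a grading, so $\mathcal{A}_V \coloneqq \mathcal{A}(\Sigma_V)$ is a graded quantum rooted cluster algebra of finite rank. Next, for $V \subseteq W$ I would verify that the inclusion $\tilde{\mathbf{x}}|_V \hookrightarrow \tilde{\mathbf{x}}|_W$ extends to a graded quantum rooted cluster morphism $\mathcal{A}_V \to \mathcal{A}_W$: it carries cluster variables to cluster variables and exchangeable variables to exchangeable variables (an index of $\ex_V$ has all its neighbours in $V \subseteq W$, so it lies in $\ex_W$); it preserves grading and quasi-commutation exactly, since for $\Sigma_V$ these are restrictions of the data for $\Sigma_W$; and it commutes with mutation along any sequence of indices in $\ex_V$, because throughout such a sequence the $\ex_V$-columns of the exchange matrix stay supported on $V$ (a mutation at $i \in \ex_V$ creates arrows only between elements of $N(i) \subseteq V$), so the exchange relations computed in $\Sigma_V$ and in $\Sigma_W$ literally agree. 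This yields a directed system $(\mathcal{A}_V)_V$ of finite-rank graded quantum rooted cluster algebras together with a compatible family of inclusions $\iota_V\colon \mathcal{A}_V \to \mathcal{A}$.

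It remains to show that $(\mathcal{A},(\iota_V)_V)$ is the colimit. The $\iota_V$ form a cocone by construction. Given another cocone $(\mathcal{B},(g_V)_V)$, I would define $g\colon\mathcal{A}\to\mathcal{B}$ thus: a cluster variable $z$ of $\mathcal{A}$ is produced from the initial seed by a finite mutation sequence along some finite $S \subseteq \ex$; taking $V = S \cup \bigcup_{i\in S}N(i)$, which is finite by local finiteness and satisfies $S \subseteq \ex_V$, the analysis above shows that this sequence is defined in $\mathcal{A}_V$ and produces $z_V \in \mathcal{A}_V$ with $\iota_V(z_V)=z$; set $g(z)\coloneqq g_V(z_V)$ and extend multiplicatively, using that the cluster variables generate $\mathcal{A}$. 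Well-definedness follows from the cocone identities, the directedness of the poset, and injectivity of the transition maps and the $\iota_V$ on cluster variables. That $g$ is a morphism of graded quantum rooted cluster algebras, unique with $g\circ\iota_V=g_V$, then follows from $\mathcal{A}=\bigcup_V \iota_V(\mathcal{A}_V)$ as an increasing union of graded subalgebras, since multiplication, grading, quasi-commutation, and every mutation producing a new cluster are all detected on some $\mathcal{A}_V$, on which $g$ restricts to $g_V$.

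The step I expect to be the main obstacle is showing that the maps involved are genuine graded quantum rooted cluster morphisms rather than merely homomorphisms of graded algebras: one must handle biadmissibility of mutation sequences and the compatibility-with-mutation axiom for the inclusions $\mathcal{A}_V \hookrightarrow \mathcal{A}_W$ and for $g$, all while preserving the quasi-commutation relations exactly --- the rigidity that, as the introduction stresses, makes the quantum case more delicate than the commutative one of \cite{Gratz-Colimits}. A secondary technical point, again dispatched by the row- and column-finiteness of $\tilde{B}$, is to confirm that a colimit of a directed system of this shape exists at all in the graded quantum category, equivalently that the ``union'' seed reconstituted from the $\Sigma_V$ --- which is just the original seed of $\mathcal{A}$ --- genuinely carries the graded quantum structure exhibiting the required universal property.
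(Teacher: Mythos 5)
Your proposal is correct and rests on the same two pillars as the paper's proof: the observation that a finite full subseed that is ``connected only by coefficients'' to the ambient seed admits every admissible mutation sequence with literally the same exchange relations, toric-frame values and degrees (the paper's Lemma~\ref{L:full subseeds} and Theorem~\ref{T:inclusion morphism}), and the fact that every element of the infinite-rank algebra already lives in one of the finite-rank subalgebras, which drives the universal property. Your inductive remark that mutation at $k$ with $N(k)\subseteq V$ only creates or alters entries between elements of $N(k)$, so that no index of $\ex_V$ ever acquires a neighbour outside $V$, is exactly the content of that lemma, and you are right that this is where the quantum rigidity has to be checked (the toric frames and quasi-commutation data agree on all of $\integ^{V}$, not just on basis vectors). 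The one genuine difference is the shape of the diagram: you take the filtered poset of \emph{all} finite subsets $V$, with $\ex_V$ cut down to exchangeable indices whose whole neighbourhood lies in $V$, whereas the paper first splits off connected components via coproducts and then grows a single linear chain $\Sigma_0\subseteq\Sigma_1\subseteq\dotsm$ by repeatedly adjoining neighbours. Your version dispenses with the connected-component reduction and with any countability hypothesis (which you assume but do not need), at the price of obtaining only a filtered colimit; the paper's version is engineered to prove the finer statement that a \emph{connected} seed yields a \emph{linear} colimit, which is what Theorem~\ref{T:connected colimit} actually delivers. Both establish the theorem as stated. Two small points to tidy: your well-definedness argument for $g$ should not appeal to injectivity of the transition maps on cluster variables so much as to the fact that any finite set of generators and any single relation among them is witnessed in some $\CC(\Sigma_V)$, which is how the paper phrases it; and you should record that $\inv_V=V\cap\inv$ so that the inverted frozen variables are also exhausted by the system.
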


As an application, we extend our previous construction of the infinite quantum Grassmannian $\curly{O}_{q}(\Gr(2,\infty))$ from \cite{Grabowski-Gratz}, where we relied heavily on properties of the infinite rank cluster category studied by Holm and J\o rgensen (\cite{HJ}).  Here we construct infinite graded quantum Grassmannians $\curly{O}_{q}(\Gr(k,\infty))$ for all $k$, as colimits of the quantum cluster algebra structures on Grassmannians $\curly{O}_{q}(\Gr(k,n))$ (\cite{GradedQCAs-Grkn}).

\subsection*{Acknowledgements}

The first author would like to thank the Institut f\"{u}r Algebra, Zahlentheorie und Diskrete Mathematik, Leibniz Universit\"{a}t Hannover for their hospitality in June 2014, during which time this work was begun. Part of this work has been carried out in the framework of the research priority programme SPP 1388 Darstellungstheorie of the Deutsche Forschungsgemeinschaft (DFG). The second author gratefully acknowledges financial support through the grant HO 1880/5-1

\section{Rooted cluster algebras} \label{S:rooted cluster algebras}

Cluster algebras were introduced by Fomin and Zelevinsky in \cite{FZ-CA1} and their quantum analogues by Berenstein and Zelevinsky in \cite{BZ-QCA}. Throughout this paper we work with (graded) (quantum) cluster algebras of geometric type.

The definition of a quantum cluster algebra requires some technical preparation and, in the interest of brevity, we refer the reader to the recent work of Goodearl and Yakimov (\cite{GoodearlYakimovQCA}) for a detailed exposition.  In particular we follow \cite{GoodearlYakimovQCA} in considering quantum cluster algebras with multiple quantum parameters, which generalises \cite{BZ-QCA}, where only the single parameter case was treated.  The main deviations from \cite{GoodearlYakimovQCA} will be that we will permit quantum seeds of arbitrary cardinality and that we will incorporate gradings into the picture from the start.

Let $M$ be a matrix whose rows are indexed by a set $R$ and columns by a set $C$.  Then given $I\subseteq R$ and $J \subseteq C$, we denote by $M|_{I}^{J}$ the submatrix of $M$ with rows indexed by $I$ and columns indexed by $J$.  If $I=R$, we write simply $M|^{J}$ and correspondingly write simply $M|_{I}$ if $J=C$.  Columns (respectively, rows) of a matrix $M$ will be denoted $M^{j}$ (resp.\ $M_{i}$).

\subsection{Seeds}\label{S:Seeds from triangulations of the closed disc}

Throughout, given $\mathbb{K}$-algebra elements $X_{1},\dotsc ,X_{r}$ and $a=(a_{1},\dotsc ,a_{r})^{T}\in \integ^{r}$ we set
\[ X^{a} = \prod_{i=1}^{r} X_{i}^{a_{i}} \]
as a shorthand for monomials, where the empty product is defined to be $1$.  If $a=(a_{i})_{i\in I}$ is an $I$-indexed family of integers with finite support (that is, the set $\supp(a)=\{ i \in I \mid a_{i}\neq 0\}$ is finite), we write $X^{a}=\prod_{i\in \supp(a)} X_{i}^{a_{i}}$.

Let $\var$ be a set; this set will be the indexing set for our cluster variables and all their associated data.  By mild abuse of notation, we denote by $\integ^{\var}$ the free Abelian group on a basis $\{ e_{i} \mid i\in \var \}$.  Note that, by the definition of an infinitely generated free Abelian group, $a\in \integ^{\var}$ may be identified with a $\var$-indexed family of integers with finite support.

A $(\var \cross \var)$-indexed family $\mathbf{q}=(q_{kj})_{k,j\in \var}$ of elements of $\mathbb{K}^{\ast}$ is called a multiplicatively skew-symmetric matrix\footnote{It is natural to regard $(I\cross I)$-indexed (or indeed, $(I \cross J)$-indexed) families of integers or elements of $\mathbb{K}$ as matrices, even when $I$ is not necessarily ordered, in line with the finite and countable situations, and we shall do so.} if $q_{kk}=1$ and $q_{jk}=q_{kj}^{-1}$ for all $j\neq k$.  There is an associated skew-symmetric bicharacter $\Omega_{\mathbf{q}}\colon \integ^{\var}\cross \integ^{\var} \to \mathbb{K}^{\ast}$ given by $\Omega_{\mathbf{q}}(e_{k},e_{j})=q_{kj}$.

A multiplicatively skew-symmetric matrix $\mathbf{q}$ as above gives rise to a quantum torus $\curly{T}_{\mathbf{q}}$.  This is defined to be the quotient of $\mathbb{K}\lgen Y_{i}^{\pm 1} \mid i\in \var \rgen$ (the noncommutative Laurent polynomial algebra generated by the $Y_{i}$) by the ideal generated by the set $\{ Y_{k}Y_{j}-q_{kj}Y_{j}Y_{k} \mid j,k\in \var \}$.  Then $\curly{T}_{\mathbf{q}}$ has a $\mathbb{K}$-basis $\{ Y^{a} \mid a\in \integ^{\var} \}$.

The subalgebra of $\curly{T}_{\mathbf{q}}$ generated by the set $\{ Y_{i} \mid i\in \var \}$ will be denoted $\curly{A}_{\mathbf{q}}$ and called quantum affine space.

Given a multiplicatively skew-symmetric matrix $\mathbf{r}=(r_{kj})$, we write $\mathbf{r}^{\cdot 2}=(r_{kj}^{2})$.  Then $\curly{T}_{\mathbf{r}^{\cdot 2}}$ has a $\mathbb{K}$-basis $\{ Y^{(a)}=\curly{S}_{\mathbf{r}}(a)Y^{a} \mid a\in \integ^{\var}\}$ where $\curly{S}_{\mathbf{r}}(a)=\prod_{j<k} r_{jk}^{-a_{j}a_{k}}$ (again, the empty product is defined to be 1).  It is straightforward to check that $Y^{(e_{i})}=Y_{i}$ for $i\in \var$ and $Y^{(a)}Y^{(b)}=\Omega_{\mathbf{r}}(a,b)Y^{(a+b)}$.  The pair consisting of $\curly{T}_{\mathbf{r}^{\cdot 2}}$ and the $\mathbb{K}$-basis $\{ Y^{(a)} \mid a\in \integ^{\var}\}$ will be called the \emph{based} quantum torus associated to $\mathbf{r}$.

This brings us to the definitions of a toric frame and an exchange matrix.

\begin{definition}[{\cite[Definition~2.2]{GoodearlYakimovQCA}}]\label{D:toric frame} Let $\curly{F}$ be a division algebra over $\mathbb{K}$.  A map $M\colon \integ^{\var}\to \curly{F}$ is a {\em toric frame} if there exists a multiplicatively skew-symmetric matrix $\mathbf{r}$ such that
\begin{enumerate}[label=\textup{(\alph*)}]
\item\label{d:toric-frame-a} there is an algebra embedding $\phi\colon \curly{T}_{\mathbf{r}^{\cdot 2}} \to \curly{F}$ given by $\phi(Y_{i})=M(e_{i})$ for all $i\in \var$,
\item $\phi(Y^{(a)})=M(a)$ for all $a\in \integ^{\var}$ and
\item $\curly{F}=\text{Fract}(\phi(\curly{T}_{\mathbf{r}^{\cdot 2}}))$.
\end{enumerate}
\end{definition}

\noindent Note that the toric frame $M$ determines the matrix $\mathbf{r}$ uniquely: for $j>i$ we set $\mathbf{r}_{ij} =\frac{M(e_i)M(e_j)}{M(e_i+e_j)}$ and write $\mathbf{r}(M)$ for this matrix.

\begin{definition} A $(\var \cross \var)$-indexed integer matrix $\tilde{B}$ is an {\em exchange matrix} if
\begin{enumerate}
\item $\tilde{B}$ is locally finite, that is, all rows and columns of $\tilde{B}$ have finite support when considered as $\var$-indexed families;
\item $\tilde{B}$ is sign-skew-symmetric: for all $i,j\in \var$, $\tilde{B}_{ij}\tilde{B}_{ji}\leq 0$ and $\tilde{B}_{ij}\tilde{B}_{ji}=0$ implies $\tilde{B}_{ij}=\tilde{B}_{ji}=0$.
\end{enumerate}
\end{definition}

Let us fix a subset $\ex \subseteq \var$ of \emph{exchangeable} indices; the elements of $\var\setminus\ex$ will be called frozen indices.  We are adopting the convention that exchange matrices $\tilde{B}$ have rows and columns indexed by $\var$; that is, in the quiver setup (where we associate a quiver to a skew-symmetric matrix in the usual way), we permit arrows between frozen vertices.  The $\ex \cross \ex$ submatrix of $\tilde{B}$ will be called the principal part and denoted $B$.

\begin{definition}[{\cite[\S 2.3]{GoodearlYakimovQCA}}]
Given a multiplicatively skew-symmetric matrix $\mathbf{r}$ and an exchange matrix $\tilde{B}$, we say that the pair $(\tilde{B},\mathbf{r})$ is {\em compatible} if the integers $t_{kj}=\Omega_{\mathbf{r}}(\tilde{B}^{k},e_{j})$ satisfy
\begin{align*}
& t_{kj}=1\ \text{for all}\ k\in \ex, j\in \var,k\neq j \\
& t_{kk}\ \text{are not roots of unity, for all}\ k\in \ex.
\end{align*}
\end{definition}

Compatible pairs may be mutated.  Recall (\cite{BFZ-CA3}) the matrices $E$ and $F$ associated to an exchange matrix $\tilde{B}$ and a choice of exchangeable index $k$ defined as follows:
\begin{align*}
E_{ij} & = \begin{cases} \delta_{ij} & \text{if}\ j\neq k \\ -1 & \text{if}\ i=j=k \\ \max(0,-b_{ik}) & \text{if}\ i\neq j=k \end{cases} \\
F_{ij} & = \begin{cases} \delta_{ij} & \text{if}\ i\neq k \\ -1 & \text{if}\ i=j=k \\ \max(0,b_{kj}) & \text{if}\ i=k\neq j \end{cases} 
\end{align*}
Since $\tilde{B}$ is locally finite, so are $E$ and $F$.

In \cite{BFZ-CA3} and \cite{GoodearlYakimovQCA}, signed versions of these matrices are defined and it is then shown that mutation does not depend on the choice of this sign, so we have given here only $E=E_{+}$ and $F=F_{+}$.  Also, in those papers, the more usual $\var \cross \ex$ exchange matrices are considered; however the extension of these to square matrices by the same formul\ae\ remains valid.

Then for a compatible pair $(\tilde{B},\mathbf{r})$ and $k\in \ex$, we may define the mutation of $\tilde{B}$ as the matrix $\mu_{k}(\tilde{B})=E\tilde{B}F$.  Here and throughout we use the usual notation for matrix multiplication: since at least one of the matrices concerned will always be locally finite, the standard definition $(AB)_{ij}=\sum_{k} a_{ik}b_{kj}$ remains valid.

We also define the mutation of $\mathbf{r}$ as 
\[ \mu_{k}(\mathbf{r})_{ij}=\prod_{k,l} b_{kl}^{E_{ki}E_{lj}} \]
It is shown in \cite[Proposition~2.6]{GoodearlYakimovQCA} that the pair $(\mu_{k}(\tilde{B}),\mu_{k}(\mathbf{r}))$ is again compatible, provided that the principal part of $\tilde{B}$ is skew-symmetrizable.  We will work in geometric type and so assume that the latter condition indeed holds.

Given an exchange matrix, we may define an associated grading matrix, as follows.

\begin{definition}\label{d:grading} Let $I$ be a set.  A {\em $\integ^{I}$-grading for an exchange matrix $\tilde{B}$} is a $(\var\cross I)$-indexed family of integers such that $\tilde{B}^{T}G=0$.  That is, for all $k\in \ex$, $\sum_{\substack{j\in \var\\ l\in I}} b_{jk}g_{jl}=0$.
\end{definition}

Gradings may also be mutated, by setting $\mu_{k}(G)=E^{T}G$ where $E$ is as defined previously.

\begin{remark}\label{R:classical} Although we introduce gradings at this early stage in the exposition, rather than as a later addition, the reader may---if they so wish---ignore references to the grading, by taking $G=0$ in the statements above and below.

Similarly, those interested in graded classical cluster algebras may recover that case by considering the multiplicatively skew-symmetric matrix with $\mathbf{r}(M)_{kj}=1$ for all $k\leq j$.
\end{remark}

The starting point for a graded quantum cluster algebra is a graded quantum seed, which consists of a toric frame, an exchange matrix satisfying a compatibility condition with the toric frame, a grading and (to facilitate certain definitions later) also records the sets $\var$ and $\ex$, as well as a third set $\inv$.

\begin{definition}[cf.\ \cite{BZ-QCA}] A {\em graded quantum seed} of a division algebra $\curly{F}$ is a tuple \hfill \hfill \linebreak $\Sigma=(M,\tilde{B},G,\var,\ex,\inv)$ where $M\colon \integ^{\var} \to \curly{F}$ is a toric frame, $\tilde{B}$ is an exchange matrix and $G$ is a grading for $\tilde{B}$ such that
\begin{enumerate}[label=\textup{(\alph*)}]
\item the pair $(\tilde{B},\mathbf{r}(M))$ is compatible, where $\mathbf{r}(M)$ is the matrix of $M$ and
\item the principal part of $\tilde{B}$, $\tilde{B}|_{\ex}^{\ex}$, is skew-symmetrizable.
\end{enumerate}
The sets $\var$ and $\ex$ are as above and $\inv$ is a fixed subset of $\var\setminus\ex$.
\end{definition}

The images of the standard basis vectors, $M(e_{i}) \in \curly{F}$, will be called the {\em quantum cluster variables of the quantum seed}.  The variables $M(e_{i})$ for $i\in \ex$ will be called {\em exchangeable} or {\em mutable}; the variables $M(e_{i})$ for $i\in \var\setminus \ex$ will be called {\em frozen} or {\em coefficients}. By abuse of terminology, we will sometimes simply talk about seeds when we mean graded quantum seeds.

\begin{remark} In contrast to \cite{Gratz-Colimits}, $\ex \subset \var$ is a collection of indices rather than cluster variables.
\end{remark}

Mutation of quantum seeds is defined via certain automorphisms: details are given in \cite[\S 2.5]{GoodearlYakimovQCA}.  We state Corollary~2.6 of \cite{GoodearlYakimovQCA}, which suffices for our purposes.

For $k$ an exchangeable index and $\tilde{B}=(b_{ij})$ an exchange matrix, set
\begin{align*}
[b^{k}]_{+} & = \sum_{b_{ik}>0}b_{ik}e_{i} \qquad \text{and} \\
[b^{k}]_{-} & = \sum_{b_{ik}<0}b_{ik}e_{i}
\end{align*}
Note that the $k$th column of $\tilde{B}$ may be recovered as $\tilde{B}^{k}=[b^{k}]_{+}-[b^{k}]_{-}$.

\begin{corollary}[{\cite[Corollary~2.6]{GoodearlYakimovQCA}}]\label{cor:mutation} For all pairs $(M,\tilde{B})$, with $M$ and $\tilde{B}$ as previously, and for all $k\in \ex$, there exists a toric frame $\mu_{k}(M)\colon \integ^{\var} \to \curly{F}$ such that
\begin{enumerate}[label=\textup{(\alph*)}]
\item the matrix of $\mu_{k}(M)$ is equal to $\mu_{k}(\mathbf{r}(M))$,
\item\label{cor:mutation-exch-relns} the toric frame satisfies
\begin{align*}
\mu_{k}(M)(e_{j}) & =e_{j} \qquad \text{for}\ j\neq k \\
\mu_{k}(M)(e_{k}) & = M(-e_{k}+[b^{k}]_{+})+M(-e_{k}+[b^{k}]_{-}) 
\end{align*}
\item mutation is involutive: $\mu_{k}(\mu_{k}(M,\tilde{B}))=(M,\tilde{B})$. \qed
\end{enumerate}
\end{corollary}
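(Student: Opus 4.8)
The plan is to deduce the statement from its finite-rank counterpart, \cite[Corollary~2.6]{GoodearlYakimovQCA}, by exploiting the local finiteness of $\tilde{B}$. The guiding observation is that every ingredient of $\mu_{k}$ is concentrated near the index $k$: the column $\tilde{B}^{k}$ is finitely supported, and --- directly from the defining formul\ae\ --- $E$ agrees with the identity matrix outside its $k$-th column and $F$ agrees with the identity outside its $k$-th row, each with finitely supported nontrivial part. In particular $\mu_{k}(\tilde{B})=E\tilde{B}F$ agrees with $\tilde{B}$ outside row and column $k$, one has $\mu_{k}(\tilde{B})^{k}=-\tilde{B}^{k}$, and $\mu_{k}(\mathbf{r}(M))$ agrees with $\mathbf{r}(M)$ outside row and column $k$.

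First I would fix a finite subset $W\subseteq\var$ with $\{k\}\cup\supp(\tilde{B}^{k})\subseteq W$ and $k\in W\cap\ex$, carrying the induced exchangeable/frozen partition, and restrict to it. Then $\tilde{B}|_{W}^{W}$ is a finite exchange matrix, and $M_{W}(e_{i}):=M(e_{i})$ for $i\in W$ defines a toric frame $M_{W}\colon\integ^{W}\to\curly{F}_{W}$ with matrix $\mathbf{r}(M)|_{W}^{W}$, where $\curly{F}_{W}\subseteq\curly{F}$ is its fraction field. The hypotheses needed to invoke \cite[Corollary~2.6]{GoodearlYakimovQCA} survive this restriction --- skew-symmetrizability of the principal part immediately, and compatibility of $(\tilde{B},\mathbf{r}(M))$ because $\Omega_{\mathbf{r}(M)}(\tilde{B}^{k},e_{j})$ involves only entries $\mathbf{r}(M)_{ij}$ with $i\in\supp(\tilde{B}^{k})\subseteq W$. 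That corollary then provides a toric frame $\nu\colon\integ^{W}\to\curly{F}_{W}$ with matrix $\mu_{k}(\mathbf{r}(M)|_{W}^{W})$, with $\nu(e_{j})=M(e_{j})$ for $j\in W\setminus\{k\}$ and, since $[b^{k}]_{\pm}$ are supported in $W$, with $\nu(e_{k})=M(-e_{k}+[b^{k}]_{+})+M(-e_{k}+[b^{k}]_{-})$.

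Next I would extend $\nu$ to all of $\integ^{\var}$: set $\mu_{k}(M)(e_{j}):=M(e_{j})$ for $j\neq k$ and $\mu_{k}(M)(e_{k}):=\nu(e_{k})$. Since $\mu_{k}(\mathbf{r}(M))$ differs from $\mathbf{r}(M)$ only in row and column $k$, these elements of $\curly{F}$ quasi-commute according to $\mu_{k}(\mathbf{r}(M))^{\cdot 2}$ --- for indices both different from $k$ this is the relation already satisfied by $M$, while when $k$ occurs it is exactly what $\nu$ provides --- so one obtains a well-defined algebra map $\curly{T}_{\mu_{k}(\mathbf{r}(M))^{\cdot 2}}\to\curly{F}$. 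One must then verify that $\mu_{k}(M)$ is a toric frame in the sense of Definition~\ref{D:toric frame}: injectivity of this map, and the identity $\curly{F}=\text{Fract}\big(\mu_{k}(M)(\curly{T}_{\mu_{k}(\mathbf{r}(M))^{\cdot 2}})\big)$, are obtained by assembling the embedding $\phi$ attached to $M$ on the unchanged generators $M(e_{j})$, $j\neq k$, with the embedding attached to $\nu$ on the finite $W$-part, using that $\curly{F}=\text{Fract}\big(M(\curly{T}_{\mathbf{r}(M)^{\cdot 2}})\big)$ and $\nu(e_{k})\in\curly{F}_{W}$. Granting this, part~(a) holds by construction and part~(b) by construction together with the formula for $\nu(e_{k})$. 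For part~(c): since $\mu_{k}(\tilde{B})^{k}=-\tilde{B}^{k}$ is supported in $W$, the vectors entering the exchange relation for $\mu_{k}(\mu_{k}(M))(e_{k})$ lie in $\integ^{W}$, on which $\mu_{k}(M)$ agrees with $\nu$; combined with $\mu_{k}(\tilde{B})|_{W}^{W}=\mu_{k}(\tilde{B}|_{W}^{W})$ and $\mu_{k}(\mathbf{r}(M))|_{W}^{W}=\mu_{k}(\mathbf{r}(M)|_{W}^{W})$ (each mutated entry with indices in $W$ depends only on data indexed by $W$), the computation of $\mu_{k}(\mu_{k}(M))$ collapses to the $W$-part, where \cite[Corollary~2.6(c)]{GoodearlYakimovQCA} gives $\mu_{k}(\mu_{k}(M_{W}))=M_{W}$; since nothing is altered outside $W$, $\mu_{k}(\mu_{k}(M))=M$.

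I expect the real difficulty to lie in the toric-frame verification of the last step: producing the algebra embedding of the infinite-variable quantum torus $\curly{T}_{\mu_{k}(\mathbf{r}(M))^{\cdot 2}}$ into $\curly{F}$ and confirming that its fraction field is all of $\curly{F}$. This needs these (generic) infinite quantum tori and their fraction fields to behave compatibly with passing to the subtorus indexed by $W$ --- essentially an Ore-condition point --- after which the remaining arguments are bookkeeping governed by the explicit shapes of $E$ and $F$ and the finiteness of $\supp(\tilde{B}^{k})$. Alternatively one could transcribe Goodearl and Yakimov's explicit construction of $\mu_{k}(M)$ via an automorphism of $\curly{F}$ and a reindexing of $\integ^{\var}$, whose defining formul\ae\ involve only $\tilde{B}^{k}$ and hence make sense verbatim in the infinite setting; the reduction above merely isolates the one genuinely new ingredient.
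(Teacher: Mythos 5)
The paper offers no proof of this statement at all: it is imported verbatim from Goodearl--Yakimov (note the \qed in the statement itself), with the extension to infinite $\var$ left implicit on the strength of the earlier remarks that $\tilde{B}$, $E$ and $F$ are locally finite. Your reduction-to-finite-rank argument is therefore not ``the paper's proof done differently'' but an attempt to supply the justification the paper omits, and in outline it is the right one: mutation at $k$ only sees the finitely many indices in $\{k\}\cup\supp(\tilde{B}^{k})$, so one restricts, invokes the finite-rank result, and glues back. Your closing remark --- that one could instead transcribe Goodearl and Yakimov's automorphism construction of $\mu_{k}(M)$, whose formul\ae{} only involve $\tilde{B}^{k}$ --- is in fact closer to what the paper implicitly relies on, and avoids the Ore/fraction-field bookkeeping for the infinite quantum torus that you correctly identify as the residual work in your route.

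Two concrete slips. First, your verification that the restricted pair $(\tilde{B}|_{W}^{W},\mathbf{r}(M)|_{W}^{W})$ is compatible only checks the condition for the column indexed by $k$; if you carry ``the induced exchangeable/frozen partition'' $\ex_{W}=W\cap\ex$, then for $k'\in W\cap\ex$ with $k'\neq k$ the column $\tilde{B}^{k'}$ need not be supported in $W$, the truncated column $(\tilde{B}|_{W}^{W})^{k'}$ is then a different vector, and $\Omega_{\mathbf{r}(M)|_{W}}((\tilde{B}|_{W}^{W})^{k'},e_{j})=1$ can fail. The fix is to declare only $k$ exchangeable in the restricted seed (harmless, since you only mutate at $k$), or to note that only the $k$-column conditions are used in constructing $\mu_{k}$. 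Second, your guiding observation that $\mu_{k}(\tilde{B})=E\tilde{B}F$ agrees with $\tilde{B}$ outside row and column $k$ is false: for $i,j\neq k$ one has $\mu_{k}(\tilde{B})_{ij}=b_{ij}+\max(0,-b_{ik})b_{kj}+b_{ik}\max(0,b_{kj})$, which changes $b_{ij}$ whenever $b_{ik}$ and $b_{kj}$ are both positive or both negative. Fortunately the statement you actually use later --- that $\mu_{k}(\tilde{B})$ agrees with $\tilde{B}$ outside the finite block indexed by $\{k\}\cup\supp(\tilde{B}^{k})$, and that $\mu_{k}(\tilde{B})|_{W}^{W}=\mu_{k}(\tilde{B}|_{W}^{W})$ --- is true, since the correction term vanishes unless both $i$ and $j$ lie in $\supp(\tilde{B}^{k})$ (using sign-skew-symmetry for the row). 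With these repairs, and granting the quantum-torus embedding argument you flag, the reduction goes through.
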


Then for $k\in \ex$, we may define the mutation of a graded quantum seed 
\[
\Sigma=(M,\tilde{B},G,\var,\ex,\inv)
\]
 to be the tuple 
 \[\mu_{k}(\Sigma)=(\mu_{k}(M),\mu_{k}(\tilde{B}),\mu_{k}(G),\var,\ex,\inv)
 \] 
 where each of the first three components has its mutation as defined above and the remaining components are unchanged under mutation.

\subsection{Rooted cluster algebras}\label{S:Rooted cluster algebras associated to triangulations of the closed disc}

We can successively mutate a seed $\Sigma$ along what are called $\Sigma$-admissible sequences. Mutation along all possible $\Sigma$-admissible sequences will provide a prescribed set of generators of the cluster algebra associated to the seed $\Sigma$, the definition of which we will recall in this section.

\begin{definition}[{\cite[Definition~1.3]{ADS}}]\label{D:admissible sequence}
Let $\Sigma = (M,\tilde{B},G,\var,\ex,\inv)$ be a graded quantum seed and $\curly{F}$ the division algebra associated to $M$. For $l \geq 1$ a sequence $(x_1, \ldots, x_l)$ of elements for $\curly{F}$ is called {\em $\Sigma$-admissible} if
\begin{itemize} 
\item $x_1=M(e_{i_{x_{1}}})$ for some $i_{x_{1}} \in \ex$; and
\item for every $2 \leq k \leq l$, we have $x_k=(\mu_{i_{x_{k-1}}}\circ \ldots \circ \mu_{i_{x_{1}}})(M)(e_{i_{x_{k}}})$ for some $i_{x_{k}}\in \ex$.
\end{itemize}
We will call the integer sequence $(i_{x_{1}},\ldots ,i_{x_{k}})$ the {\em mutation index sequence associated to $(x_{1},\ldots ,x_{k})$}. 

The empty sequence of length $l = 0$ is $\Sigma$-admissible for every seed $\Sigma$ and mutation of $\Sigma$ along the empty sequence leaves $\Sigma$ invariant. We denote by
\[
\curly{S}(\Sigma) = \{\mu_{i_{x_{l}}} \circ \ldots \circ \mu_{i_{x_{1}}}(\Sigma) \mid l \geq 0, (x_1, \ldots, x_l) \text{ $\Sigma$-admissible}\}
\]
the set of all graded quantum seeds of $\curly{F}$ which can be reached from $\Sigma$ by iterated mutation along $\Sigma$-admissible sequences and call it the {\em mutation equivalence class of $\Sigma$}.
\end{definition}

Since mutation is involutive, it is clear that mutation along $\Sigma$-admissible sequences induces an equivalence relation on seeds, where two seeds $\Sigma$ and $\Sigma'$ are {\em mutation equivalent} if and only if there exists a $\Sigma $-admissible sequence $(x_1, \ldots, x_l)$ with $\mu_{i_{x_{l}}} \circ \ldots \circ \mu_{i_{x_{1}}}(\Sigma) = \Sigma'$. The mutation class of a seed $\Sigma$ is thus really an equivalence class. Mutation equivalence of exchange matrices is defined analogously.

\begin{remark} \label{R:coefficients in every seed}
It is a direct consequence of the definition of mutation (Corollary~\ref{cor:mutation}\ref{cor:mutation-exch-relns}) that if two seeds $\Sigma$ and $\Sigma'$ are mutation equivalent, then the coefficients of $\Sigma$ are precisely the coefficients of $\Sigma'$. That is, if $\ex$ and $\ex'$ are the exchangeable indices for $\Sigma$ and $\Sigma'$ respectively, then $\ex=\ex'$ and $\var\setminus \ex=\var\setminus \ex'$.  Further, any two mutation equivalent seeds give rise to toric frames with the same division algebra $\curly{F}$ as their codomains.
\end{remark}

Given a graded quantum seed $\Sigma=(M,\tilde{B},G,\var,\ex,\inv)$, consider the mutation equivalence class $\curly{S}(\Sigma)$ of graded quantum seeds of $\curly{F}$ containing $\Sigma$.  Let $\curly{V}(\Sigma)$ be the set of all quantum cluster variables in $\curly{S}(\Sigma)$, that is,
\[ \curly{V}(\Sigma)=\{ M'(e_{i}) \mid (M',\tilde{B}',G',\var,\ex,\inv)\in \curly{S}(\Sigma), i\in \var \}. \]

We also denote by $\cl(\Sigma)$ the set $\{ M(e_{i}) \mid i\in \var \}$, which we call the cluster of $\Sigma$. Set $\cl_{\ex}(\Sigma)=\{ M(e_{i}) \mid i \in \ex \}\subseteq \cl(\Sigma)$.

Let $D$ be a unital subring of $\mathbb{K}$ containing the subgroup generated by the set \[ \{ \Omega_{\mathbf{r}(M)}(f,g) \mid f,g\in \integ^{\var}\}.\]

\begin{definition} The {\em graded quantum cluster algebra}\footnote{We will use $\curly{C}$ for the cluster algebra, rather than $\curly{A}$ as in \cite{ADS} and \cite{Gratz-Colimits}, and reserve $\curly{A}$ for quantum affine spaces.} $\curly{C}(\Sigma)_{D}$ associated to a graded quantum seed $\Sigma=(M,\tilde{B},G,\var,\ex,\inv)$ is the unital $D$-subalgebra of $\curly{F}$ generated by $\curly{V}(\Sigma) \union \{ M(e_{l})^{-1} \mid l\in \mathbf{inv} \}$.  

The {\em rooted graded quantum cluster algebra} with initial seed $\Sigma$ is the pair $(\curly{C}(\Sigma)_{D},\Sigma)$.
\end{definition}

That is, the quantum cluster algebra is generated by all quantum cluster variables in quantum seeds mutation equivalent to $(M,\tilde{B})$, together with the inverses of the frozen variables whose index lies in $\mathbf{inv}$.  Note that while two seeds in the same mutation class give rise to the same cluster algebra, they do not give rise to the same rooted cluster algebra. We can think of rooted cluster algebras as pointed versions of cluster algebras. 

\begin{remark} We have followed \cite{GoodearlYakimovQCA} in stating a definition whereby a specified subset of the frozen variables, the set $\inv \subseteq \var\setminus \ex$, have inverses in the quantum cluster algebra; in the original cluster algebras literature, this subset typically contains all the frozen variables, but in applications this set may be taken to be much smaller or even empty (for example, cluster algebra structures on Grassmannians).
\end{remark}

In this general setting, Goodearl and Yakimov prove that the quantum Laurent phenomenon holds, namely that any quantum cluster variable in $\curly{C}(\Sigma)_{D}$ may be written as a Laurent polynomial in the quantum cluster variables of the initial seed (or indeed any seed).  We refer the reader to \cite[\S 2.5]{GoodearlYakimovQCA} for a precise statement and proof.

In what follows we will always take $D=\mathbb{K}$ and suppress $\mathbb{K}$ in the notation: all of our cluster algebras will be defined over $\mathbb{K}$. From now on, we will also simply say ``cluster algebra'' to refer to a graded quantum cluster algebra, and hence ``rooted cluster algebra'' for the pair $(\curly{C}(\Sigma),\Sigma)$.  As mentioned in Remark \ref{R:classical}, classical commutative cluster algebras and ungraded cluster algebras are simply special cases.

We call the rooted cluster algebra $(\CC(\Sigma),\Sigma)$ {\em skew-symmetric} if the matrix $\tilde{B}$ is skew-sym\-metric (according to the obvious definition in general, $b_{kj}=-b_{jk}$). The {\em rank} of the rooted cluster algebra $(\CC(\Sigma),\Sigma)$ is defined as the cardinality of $\var$.

\begin{remark}
Here we follow \cite{Gratz-Colimits} in our definition of rank.  Traditionally, the rank of a cluster algebra $\CC(\Sigma)$ is defined as the cardinality of the set of exchangeable variables of $\Sigma$, while we define it as the cardinality of the cluster of $\Sigma$. The main interest in this paper is in cluster algebras of infinite rank, and when we talk about those we explicitly want to include cluster algebras associated to seeds with infinitely many coefficients but only finitely many exchangeable variables.
\end{remark}

We also note that the use of the word ``grading'' in Definition~\ref{d:grading} is justified: a $\integ^{I}$-grading in the sense defined there assigns to the cluster variable $M(e_{i})$ the multi-degree $G_{i}$ and this extends via mutation to an algebra $\integ^{I}$-grading on $\CC(\Sigma)$ for which every cluster variable is homogeneous.  We refer the reader to \cite{GradedClusterAlgebras} for a more detailed discussion of cluster algebra gradings.

\section[Rooted cluster morphisms and the category of rooted cluster algebras]{Rooted cluster morphisms and the category of rooted cluster algebras
\sectionmark{Rooted cluster morphisms}
}\label{S:Category of rooted cluster algebras}
\sectionmark{Rooted cluster morphisms}

\subsection{Rooted cluster morphisms}

\begin{definition}[{\cite[Definition~2.1]{ADS}}]\label{D:biadmissible}
Let $\Sigma$ and $\Sigma'$ be seeds and let $f \colon \CC(\Sigma) \to \CC(\Sigma')$ be a map between their associated cluster algebras. A $\Sigma$-admissible sequence $(x_1, \ldots, x_l)$ whose image $(f(x_1), \ldots, f(x_l))$ is $\Sigma'$-admis\-sible is called {\em $(f,\Sigma,\Sigma')$-biadmissible}. 
\end{definition}

\begin{definition}[{\cite[Definition~2.2]{ADS}}]\label{D:rooted cluster morphism}
Let \[\Sigma = (M,\tilde{B},G,\var,\ex,\inv) \quad \text{and} \quad \Sigma'=(M',\tilde{B}',G',\var',\ex',\inv')\] be seeds and let $(\CC(\Sigma),\Sigma)$ and $(\CC(\Sigma'),\Sigma')$ be the corresponding rooted cluster algebras.

A {\em rooted cluster morphism} from $(\CC(\Sigma),\Sigma)$ to $(\CC(\Sigma'),\Sigma')$ is a graded $\mathbb{K}$-algebra homomorphism $f \colon \CC(\Sigma) \to \CC(\Sigma')$ satisfying the following conditions:
\begin{enumerate}[label=(CM\arabic*)]
\item\label{CM1} $f(\cl(\Sigma)) \subseteq \cl(\Sigma') \union \mathbb{K}$.
\item\label{CM2} $f(\cl_{\ex}(\Sigma)) \subseteq \cl_{\ex'}(\Sigma') \union \mathbb{K}$.
\item\label{CM3} the homomorphism $f$ commutes with mutation along $(f,\Sigma,\Sigma')$\--bi\-ad\-mis\-sible sequences, i.e.\ for every $(f,\Sigma,\Sigma')$-biadmissible sequence $(x_1,\ldots, x_l)$ we have
\[
f(\mu_{i_{x_l}} \circ \ldots \circ \mu_{i_{x_1}}(y)) = \mu_{i_{f(x_l)}} \circ \ldots \circ \mu_{i_{f(x_1)}}(f(y))
\]
for all $y \in \cl(\Sigma)$ with $f(y) \in \cl(\Sigma')$.
\end{enumerate}
\end{definition}

Some comments on the definition are required.  Firstly, given a rooted cluster morphism, we have an induced function $\overline{f}\colon \var \to \overline{\var'}$ where $\overline{\var'}=\var' \disjointunion \{ \infty \}$ given by
\[ \overline{f}(i)=\begin{cases} j & \text{if}\ f(M(e_{i}))=M'(e_{j}) \\ \infty & \text{if}\ f(M(e_{i})) \in \mathbb{K} \end{cases} \]
Note that \emph{a priori}, $\overline{f}$ need not be injective or surjective.

Next, by graded algebra homomorphism, we mean a graded homomorphism of degree zero, i.e.\ $f(\CC(\Sigma)_{a})\subseteq \CC(\Sigma')_{a}$ for all $a\in \integ^{I}$, where $\CC(\Sigma)_{a}$ denotes the homogeneous component of $\CC(\Sigma)$ of degree $a$.  In particular, if $f(M(e_{i}))\in \cl(\Sigma')$, the degree of $f(M(e_{i}))$ must be the same as the degree of $M(e_{i})$.  

Since elements of $\mathbb{K}$ are of degree zero, this also implies that only elements of $\cl(\Sigma)$ that have degree zero can be mapped to $\mathbb{K}$ under $f$.  That is, $G_{\overline{f}^{-1}(j)}=G'_{j}$ for all $j\in \im \overline{f}\subseteq \var'$ and if $\overline{f}(i)=\infty$, we must have $G_{i}=0$.

Secondly, note that the condition that $f$ is an algebra homomorphism is very restrictive in the noncommutative setting: it immediately implies that for all $i,j\in \overline{f}^{-1}(\var')$, the cluster variables $f(M(e_{i}))$ and $f(M(e_{j}))$ have the same quasi-commutation relation as $M(e_{i})$ and $M(e_{j})$, i.e.\ $\mathbf{r}(M)_{\overline{f}^{-1}(j)}=\mathbf{r}(M')_{j}$ for all $j\in \im \overline{f}$.  Again, if $f(M(e_{i}))\in \cl(\Sigma')$, this has strong implications for $M(e_{i})$.

\begin{definition}[\cite{GrabowskiLaunois-Lifting}]
Let $\mathbf{q}$ be a multiplicatively skew-symmetric matrix.  A subset $\mathbf{z} \subseteq \var$ such that for all $z\in \mathbf{z}$, $q_{zj}=q_{jz}=1$ for all $j\in \var$ will be called a {\em $\mathbf{q}$-central subset} of $\var$.
\end{definition}

Given a $\mathbf{q}$-central subset $\mathbf{z}$, the centre of the quantum torus $\curly{T}_{\mathbf{q}}$ contains the subalgebra generated by $\{ Y_{z}^{\pm 1} \mid z\in \mathbf{z}\}$, hence the name.  Examining the condition for compatible pairs above, we see that a mutable variable cannot be $\mathbf{r}(M)$-central: if $M(e_{k})$ is $\mathbf{r}(M)$-central then $t_{kk}=1$ for any exchange matrix $\tilde{B}$, which is not permitted.

Then since elements of $\mathbb{K}$ are central, if $f(M(e_{i}))\in \mathbb{K}$ we must have that $M(e_{i})$ is central and so is not mutable.  Hence the following observation:

\begin{lemma} Let $f$ be a rooted cluster morphism as above.  Then $f^{-1}(\infty)\subseteq \var\setminus \ex$ is $\mathbf{r}(M)$-central.
\end{lemma}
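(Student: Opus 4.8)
The statement collects the observations made in the two paragraphs preceding it, so the proof is essentially a short assembly of facts already established. The plan is to show that each index $i \in f^{-1}(\infty)$ lies in $\var \setminus \ex$ and that $M(e_i)$ is $\mathbf{r}(M)$-central.

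\emph{Step 1: $f^{-1}(\infty) \subseteq \var\setminus\ex$.} Let $i \in \var$ with $\overline{f}(i) = \infty$, i.e.\ $f(M(e_i)) \in \mathbb{K}$. Since $f$ is a graded $\mathbb{K}$-algebra homomorphism and every element of $\mathbb{K}$ is central in $\CC(\Sigma')$, the element $f(M(e_i))$ commutes with $f(M(e_j))$ for every $j \in \var$. I would then argue that this forces $M(e_i)$ to be central in (the relevant subalgebra of) $\CC(\Sigma)$: using \ref{CM1}, for each $j$ either $f(M(e_j)) \in \cl(\Sigma')$ or $f(M(e_j)) \in \mathbb{K}$, and the quasi-commutation relation between $M(e_i)$ and $M(e_j)$ is transported by $f$ to that between their images, as noted in the discussion after Definition~\ref{D:rooted cluster morphism} (i.e.\ $\mathbf{r}(M)_{ij}$ equals the corresponding entry for the images). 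Since the images commute, $\mathbf{r}(M)_{ij} = 1$ for all $j \in \var$, so $\{i\}$ (together with whatever other indices map to $\infty$) is an $\mathbf{r}(M)$-central subset. By the observation recorded just before the lemma — a mutable variable cannot be $\mathbf{r}(M)$-central, since $M(e_k)$ being $\mathbf{r}(M)$-central would force $t_{kk} = \Omega_{\mathbf{r}(M)}(\tilde{B}^k, e_k) = 1$, contradicting the compatibility condition that $t_{kk}$ is not a root of unity — we conclude $i \notin \ex$. As $f^{-1}(\infty) \subseteq \var$ and no index in it can be exchangeable, $f^{-1}(\infty) \subseteq \var \setminus \ex$.

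\emph{Step 2: $f^{-1}(\infty)$ is $\mathbf{r}(M)$-central.} This is exactly the content extracted in Step 1: for every $i \in f^{-1}(\infty)$ and every $j \in \var$ we showed $\mathbf{r}(M)_{ij} = 1$, and by the multiplicative skew-symmetry of $\mathbf{r}(M)$ also $\mathbf{r}(M)_{ji} = \mathbf{r}(M)_{ij}^{-1} = 1$. By the definition of a $\mathbf{q}$-central subset (with $\mathbf{q} = \mathbf{r}(M)$), the set $f^{-1}(\infty)$ is $\mathbf{r}(M)$-central.

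\emph{Main obstacle.} The only delicate point is justifying that the quasi-commutation relations really are preserved under $f$ when one of the two images lies in $\mathbb{K}$ rather than in $\cl(\Sigma')$: the clean statement $\mathbf{r}(M)_{\overline{f}^{-1}(j)} = \mathbf{r}(M')_j$ quoted in the text concerns indices mapping into $\var'$. When $f(M(e_j)) \in \mathbb{K}$ this is automatic (scalars commute with everything, so applying $f$ to the relation $M(e_i)M(e_j) = \mathbf{r}(M)_{ij}^2\, M(e_j)M(e_i)$ — suitably normalized — and using that $f(M(e_j))$ is a unit or handling the possibility $f(M(e_j)) = 0$ separately) still yields $\mathbf{r}(M)_{ij}^2 = 1$, and then the sign is pinned down because $\mathbf{r}(M)_{ij}$ is determined by a ratio of toric frame values as in the remark after Definition~\ref{D:toric frame}; but strictly one should note the $\mathbb{K}^\ast$ hypothesis and that cluster variables are nonzero. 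I would keep this to a sentence, since it is the same type of elementary argument already used repeatedly in the surrounding discussion, and conclude.
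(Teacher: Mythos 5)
Your argument is essentially the paper's own: the lemma is proved there by the discussion immediately preceding it, namely that an algebra homomorphism transports the quasi-commutation relations, that elements of $\mathbb{K}$ are central so $M(e_i)$ must be central whenever $f(M(e_i))\in\mathbb{K}$, and that a central variable cannot be mutable since compatibility would force $t_{kk}$ to be a root of unity. The subtlety you flag in your ``main obstacle'' paragraph is real --- applying $f$ to $M(e_i)M(e_j)=\mathbf{r}(M)_{ij}^{2}M(e_j)M(e_i)$ only yields $\mathbf{r}(M)_{ij}^{2}=1$, and your proposed sign-fix via the toric-frame ratio does not actually pin down $\mathbf{r}(M)_{ij}=+1$ (nor does it address the case $f(M(e_i))=0$) --- but the paper elides exactly the same point, and the exclusion of mutable indices goes through regardless since $t_{kk}=\pm 1$ is still a root of unity.
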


Note however that $M(e_{i})$ being $\mathbf{r}(M)$-central does not necessarily imply $f(i)=\infty$; if $M(e_{i})$ is $\mathbf{r}(M)$-central but $f(i)\neq \infty$, we simply deduce that $M'(e_{f(i)})$ is $\mathbf{r}(M')$-central.  That is, we may have central coefficients whose image is not in $\mathbb{K}$.

Then in the definition of a rooted cluster morphism, we may replace \ref{CM2} with
\begin{enumerate}[label=(CM\arabic*$'$),start=2]
\item\label{CM2prime} $f(\cl_{\ex}(\Sigma)) \subseteq \cl_{\ex'}(\Sigma')$.
\end{enumerate}

The above discussion shows that in fact \ref{CM2prime} is not a stronger assumption.

From now on by abuse of notation we write $\CC(\Sigma)$ for the rooted cluster algebra $(\CC(\Sigma), \Sigma)$. 

\begin{example}\label{E:rooted cluster morphism}
Let $q\in \mathbb{K}^{*}$.  Consider the following seeds:
\begin{itemize}
\item $\Sigma = (M,\tilde{B},G,\var,\ex,\inv)$, with
\begin{align*} \mathbf{r}(M) & = \begin{pmatrix} 1 & q & 1 \\ q^{-1} & 1 & 1 \\ 1 & 1 & 1 \end{pmatrix} & \tilde{B} & = \begin{pmatrix} 0 & 1 & 0 \\ -1 & 0 & -1 \\ 0 & 1 & 0 \end{pmatrix} & G & =\begin{pmatrix} 0 & 1 & 0 \end{pmatrix} \\ \var & = \{ 1,2,3 \} & \ex & = \{ 2 \} & \inv & = \emptyset \\
\end{align*}
Set $x_{i}=M(e_{i})$.  Then, associating to the skew-symmetric matrix $\tilde{B}$ a quiver in the usual way, we may represent the initial cluster as
\[ \begin{tikzpicture}[baseline={([yshift={-\ht\strutbox}]current bounding box.north)}, scale=2.5,cap=round,>=latex,on grid,node distance=2cm]
        \tikzstyle{every node}=[font=\small]
\node[rectangle, draw] (1) at (0,0) {$x_1$};
\node[right of=1] (2) {$x_2$};
\node[right of=2,rectangle,draw] (3) {$x_3$};
\node[above of=1,yshift=-1.5cm] {$0$};
\node[above of=2,yshift=-1.5cm] {$1$};
\node[above of=3,yshift=-1.5cm] {$0$};
\draw[->] (1) -- (2);
\draw[->] (3) -- (2);
\end{tikzpicture}
\]
where we indicate the degree of the element by the integers over each vertex.
\item $\Sigma' = (M',\tilde{B}',G',\var',\ex',\inv')$, with
\begin{align*} \mathbf{r}(M') & = \begin{pmatrix} 1 & 1 & 1 \\ 1 & 1 & q \\ 1 & q^{-1} & 1 \end{pmatrix} & \tilde{B'} & = \begin{pmatrix} 0 & 1 & 0 \\ -1 & 0 & 1 \\ 0 & -1 & 0 \end{pmatrix} & G' & =\begin{pmatrix} 1 & 0 & 1 \end{pmatrix} \\ \var' & = \{ 0,1,2 \} & \ex' & = \{ 1,2 \} & \inv' & = \emptyset \\
\end{align*}
Set $y_{i}=M'(e_{i})$.  Then we may represent the initial cluster as
\[ \begin{tikzpicture}[baseline={([yshift={-\ht\strutbox}]current bounding box.north)}, scale=2.5,cap=round,>=latex,on grid,node distance=2cm]
        \tikzstyle{every node}=[font=\small]
\node[rectangle, draw] (1) at (0,0) {$y_0$};
\node[right of=1] (2) {$y_1$};
\node[right of=2] (3) {$y_2$};
\node[above of=1,yshift=-1.5cm] {$1$};
\node[above of=2,yshift=-1.5cm] {$0$};
\node[above of=3,yshift=-1.5cm] {$1$};
\draw[->] (1) -- (2);
\draw[->] (2) -- (3);
\end{tikzpicture}
\]
\end{itemize}
As an example of the calculation of quantum cluster variables, we have that
\begin{align*}
\mu_{2}(x_{2}) & = \curly{S}_{\mathbf{r}(M)}(1,-1,1)x^{(1,-1,1)}+\curly{S}_{\mathbf{r}(M)}(0,-1,0)x^{(0,-1,0)} \\
 & = (r_{12}r_{13}^{-1}r_{23})x_{1}x_{2}^{-1}x_{3}+x_{2}^{-1} \\
 & = qx_{1}x_{2}^{-1}x_{3}+x_{2}^{-1}
\end{align*}
The rooted cluster morphism $f\colon \curly{C}(\Sigma)\to \curly{C}(\Sigma')$ we consider is defined on the initial cluster variables as
\[ x_{1} \mapsto y_{1}, \qquad x_{2} \mapsto y_{2}, \qquad x_{3} \mapsto 1. \]
That is,
\[ \begin{tikzpicture}[baseline={([yshift={-\ht\strutbox}]current bounding box.north)}, scale=2.5,cap=round,>=latex,on grid,node distance=2cm]
        \tikzstyle{every node}=[font=\small]
\node[rectangle, draw] (1) at (0,0) {$x_1$};
\node[right of=1] (2) {$x_2$};
\node[right of=2,rectangle,draw] (3) {$x_3$};
\node[above of=1,yshift=-1.5cm] {$0$};
\node[above of=2,yshift=-1.5cm] {$1$};
\node[above of=3,yshift=-1.5cm] {$0$};
\draw[->] (1) -- (2);
\draw[->] (3) -- (2);

\node[below of=1] (2b) {$y_1$};
\node[left of=2b,rectangle, draw] (1b) {$y_0$};
\node[right of=2b] (3b) {$y_2$};
\node[right of=3b] (4b) {$1$};
\node[below of=1b,yshift=1.5cm] {$1$};
\node[below of=2b,yshift=1.5cm] {$0$};
\node[below of=3b,yshift=1.5cm] {$1$};
\node[below of=4b,yshift=1.5cm] {$0$};
\draw[->] (1b) -- (2b);
\draw[->] (2b) -- (3b);

\draw[->,shorten <=0.5em,dashed] (1) -- (2b);
\draw[->,shorten <=0.5em,dashed] (2) -- (3b);
\draw[->,shorten <=0.5em,dashed] (3) -- (4b);
\end{tikzpicture}
\]
This extends to a homomorphism of the associated rooted graded quantum cluster algebras and we see that it is also graded of degree $0$.  Notice that $x_{3}$ is $\mathbf{r}(M)$-central (and $3\notin \ex$) and $f(x_{3})=1$ is also central.  We also visibly have that $f$ satisfies \ref{CM1} and \ref{CM2prime}.

The induced function on $\var$ is $\overline{f}\colon \var \to \overline{\var'}$, $\overline{f}(1)=1$, $\overline{f}(2)=2$ and $\overline{f}(3)=\infty$.

To verify \ref{CM3}, notice that the only exchangeable cluster variable in $\Sigma$ whose image is exchangeable in $\Sigma'$ is $x_2$ with $f(x_2)=y_2$, so the first entry of every $(f,\Sigma,\Sigma')$-biadmissible sequence has to be $x_2$. 
Via the above calculation and a similar one for $\mu_{2}(y_{2})$, we have
\[
f(\mu_{2}(x_2)) = f(qx_1x_{2}^{-1}x_3 + x_2^{-1}) = qy_1y_{2}^{-1}+y_2^{-1} = \mu_{2}(y_2) = \mu_{f(2)}(f(x_2))
\]
and, since $f(x_i) \neq f(x_2)$ for $i = 1,3$, we have \[f(\mu_{2}(x_i)) = f(x_i) = \mu_{f(2)}(f(x_i))\] for these values of $i$.
Furthermore, the only exchangeable cluster variable in $\mu_{2}(\Sigma)$ whose image is exchangeable in $\mu_{2}(\Sigma')$ is  $\mu_{2}(x_2)$ with $f(\mu_{2}(x_2)) = \mu_{2}(y_2)$, so all $(f,\Sigma,\Sigma')$-biadmissible sequences have alternating entries $x_2$ and $\mu_{2}(x_2)$. Since mutation is involutive, the ring homomorphism $f$ commutes with mutation along any of these sequences. Thus axiom \ref{CM3} is satisfied and $f$ is a rooted cluster morphism.
\end{example}

\subsection{The category of rooted cluster algebras}
Considering rooted cluster algebras and rooted cluster morphisms gives rise to a category.  Recall that by convention by ``rooted cluster'' we mean ``rooted graded quantum cluster''; by adopting this convention, the next definition of \cite{ADS} can be given as a verbatim statement, interpreted in our more general setting.

\begin{definition}[{\cite[Definition~2.6]{ADS}}]
The {\em category of rooted cluster algebras $\Clus$} is the category which has as objects rooted cluster algebras and as morphisms rooted cluster morphisms.
\end{definition}

In \cite[Section~2]{ADS} it was shown that $\Clus$ satisfies the axioms of a category, the key point being the composibility of rooted cluster morphisms.  The argument of \cite[Proposition~2.5]{ADS} carries over identically to the graded quantum setting.

\subsection{Coproducts and connectedness of seeds} \label{ss:Connectedness of seeds and coproducts}

Assem, Dupont and Schiffler showed in \cite[Lemma~5.1]{ADS} that countable coproducts exist in the category $\Clus$ of rooted cluster algebras.  Following their proof, we obtain the corresponding result in our setting, except with arbitrary coproducts.

\pagebreak
\begin{lemma} The category $\Clus$ admits all coproducts.
\end{lemma}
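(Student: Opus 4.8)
The plan is to imitate the construction of coproducts given by Assem--Dupont--Schiffler in \cite[Lemma~5.1]{ADS}, but using a disjoint-union-of-seeds construction that works for an arbitrary (not necessarily countable) index set. Given a family $\{(\CC(\Sigma_\lambda),\Sigma_\lambda)\}_{\lambda\in\Lambda}$ of rooted cluster algebras, with $\Sigma_\lambda=(M_\lambda,\tilde B_\lambda,G_\lambda,\var_\lambda,\ex_\lambda,\inv_\lambda)$ a graded quantum seed of a division algebra $\curly F_\lambda$, the first step is to build a candidate coproduct seed $\Sigma=\bigsqcup_\lambda\Sigma_\lambda$. I would take $\var=\bigsqcup_\lambda\var_\lambda$ (disjoint union), $\ex=\bigsqcup_\lambda\ex_\lambda$, $\inv=\bigsqcup_\lambda\inv_\lambda$, let $\tilde B$ be the block-diagonal matrix with blocks $\tilde B_\lambda$ (which is still locally finite and sign-skew-symmetric, since each block is and there are no cross-terms), let $G$ be the block-diagonal grading (landing in $\bigoplus_\lambda\integ^{I_\lambda}$), and let the toric frame $M\colon\integ^{\var}\to\curly F$ be defined on the ambient division algebra $\curly F=\text{Fract}\bigl(\bigotimes_\lambda \phi_\lambda(\curly T_{\mathbf r(M_\lambda)^{\cdot 2}})\bigr)$ — i.e.\ the free product / tensor product over $\mathbb K$ of the quantum tori, localised — with $\mathbf r(M)$ the block-diagonal multiplicatively skew-symmetric matrix (so $q_{kj}=1$ whenever $k,j$ lie in different blocks). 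One checks that $(\tilde B,\mathbf r(M))$ is compatible and the principal part is skew-symmetrizable, because each of these conditions is block-local; hence $\Sigma$ is a bona fide graded quantum seed and $(\CC(\Sigma),\Sigma)$ a rooted cluster algebra.

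The second step is to produce the structure morphisms $\iota_\lambda\colon(\CC(\Sigma_\lambda),\Sigma_\lambda)\to(\CC(\Sigma),\Sigma)$, sending $M_\lambda(e_i)\mapsto M(e_i)$ for $i\in\var_\lambda$ (using the canonical inclusion $\var_\lambda\hookrightarrow\var$). This is a graded $\mathbb K$-algebra homomorphism because the quasi-commutation relations inside the $\lambda$-block are preserved exactly by construction of $\mathbf r(M)$, and the grading is preserved since $G|_{\var_\lambda}=G_\lambda$. Conditions \ref{CM1} and \ref{CM2prime} are immediate ($\iota_\lambda$ maps clusters into clusters and exchangeables to exchangeables, never into $\mathbb K$). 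For \ref{CM3}, the key observation is that mutation at an index $k\in\ex_\lambda$ in the block-diagonal seed only affects the $\lambda$-block — because $E,F$ and the mutation formulae for $\mathbf r$ and $G$ only involve entries $b_{jk}, b_{kj}$ which vanish outside the block — so $\mu_k$ on $\Sigma$ restricts to $\mu_k$ on $\Sigma_\lambda$; every $\iota_\lambda$-biadmissible sequence is therefore a $\Sigma_\lambda$-admissible sequence whose mutations are computed blockwise, and commutativity follows.

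The third step is to verify the universal property. Given rooted cluster morphisms $f_\lambda\colon(\CC(\Sigma_\lambda),\Sigma_\lambda)\to(\CC(\Sigma'),\Sigma')$ into a fixed target, I would define $f\colon\CC(\Sigma)\to\CC(\Sigma')$ on generators by $f(M(e_i))=f_\lambda(M_\lambda(e_i))$ for $i\in\var_\lambda$, and extend; this is well-defined as an algebra homomorphism provided the images $f_\lambda(M_\lambda(e_i))$ for $i$ in \emph{different} blocks quasi-commute in the required way, and they do, because in $\Sigma$ those variables commute ($q_{kj}=1$ across blocks) and each $f_\lambda$ is a graded algebra homomorphism whose images therefore also commute — more precisely, one uses that elements in the images of distinct $f_\lambda$ lie in a common cluster of $\Sigma'$ or in $\mathbb K$, and the cross-block quasi-commutation constant forced on them is $1$, matching $\mathbf r(M)$. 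Checking \ref{CM1}--\ref{CM3} for $f$ reduces blockwise to the corresponding properties of the $f_\lambda$, using again that biadmissible sequences for $f$ decompose into biadmissible sequences for the individual $f_\lambda$ (a sequence of mutations at indices in $\ex$, grouped by block). Uniqueness is clear since the $M(e_i)$ generate $\CC(\Sigma)$ and $f\circ\iota_\lambda=f_\lambda$ pins down $f$ on all of them.

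**Main obstacle.** The delicate point — and the reason this is not entirely formal — is the verification that $f$ (and indeed each $\iota_\lambda$) really is a \emph{$\mathbb K$-algebra} homomorphism in the noncommutative setting: one must confirm that the cross-block quasi-commutation relations, which are trivial in $\Sigma$, are compatible with whatever relations hold among the images in $\CC(\Sigma')$. Here the fact that $\mathbf r(M)$ is genuinely block-diagonal (so there is nothing to preserve between blocks) is what saves us, together with the observation — already emphasised after Definition~\ref{D:rooted cluster morphism} — that a rooted cluster morphism preserves quasi-commutation constants on the nose; since the source constants across blocks are all $1$, no inconsistency can arise. I would also need to take a little care with the construction of the ambient division algebra $\curly F$ and the toric frame $M$ for an arbitrary index set, to ensure the embedding condition \ref{d:toric-frame-a} of Definition~\ref{D:toric frame} holds and that $\curly F$ is indeed a division algebra (a directed colimit of the finite-sub-tensor-product Ore localisations handles the uncountable case); this is where passing from the countable argument of \cite{ADS} to arbitrary coproducts requires the most attention, though no genuinely new idea.
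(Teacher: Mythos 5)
Your proposal is correct and follows essentially the same route as the paper: the coproduct seed is the disjoint union of the index sets with block-diagonal exchange matrix, grading and quasi-commutation matrix (cross-block entries equal to $1$), the toric frame lands in the skew-field of fractions of the tensor product of the quantum tori, the structure morphisms are the canonical inclusions of tensor factors, and the universal property is inherited from the tensor product being a coproduct, exactly as in \cite[Lemma~5.1]{ADS}. Your additional attention to the cross-block quasi-commutation constants and to the construction of the ambient division algebra for an uncountable index set fills in details the paper leaves implicit, but it is the same argument.
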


\begin{proof} Let $I$ be a set and $\{ \CC(\Sigma_{i}) \mid i\in I \}$ a family of rooted cluster algebras.  For any $i\in I$, we have $\Sigma_{i}=(M_{i},\tilde{B}_{i},G_{i},\var_{i},\ex_{i},\inv_{i})$.  Here $M_{i}\colon \integ^{\var_{i}} \to \curly{F}_{i}$ with $\curly{F}_{i}$ the skew-field of fractions of a quantum torus $\curly{T}_{i}=\curly{T}_{\mathbf{r}(M_{i})^{\cdot 2}}$.  Also, each $G_{i}$ is a $\var_{i} \cross \mathbf{D}_{i}$ matrix, for some set $\mathbf{D}_{i}$.  

We define a new seed $\Sigma=(M,\tilde{B},G,\var,\ex,\inv)$ by
\begin{itemize}
\item $\var=\bigdisjointunion_{i\in I} \var_{i}$;
\item $\ex=\bigdisjointunion_{i\in I} \ex_{i}$;
\item $\inv=\bigdisjointunion_{i\in I} \inv_{i}$;
\item $M\colon \integ^{\var}\to \curly{F}$ with $\curly{F}$ the skew-field of fractions of $\bigtensor_{i\in I} \curly{T}_{i}$, defined by $\mathbf{r}(M)_{jk}=\mathbf{r}(M_{l})_{jk}$ if $j,k\in \var_{l}$, and 1 otherwise;
\item $\tilde{B}$ the $(\var \cross \ex)$-indexed matrix with $\tilde{B}_{jk}=(\tilde{B}_{l})_{jk}$ if $j\in \var_{l}$ and $k\in \ex_{l}$, and $0$ otherwise;
\item $G$ the $\var \cross \left(\bigdisjointunion \mathbf{D}_{i}\right)$-indexed matrix with $G_{jk}=(G_{l})_{jk}$ if $j\in \var_{l}$ and $k\in \mathbf{D}_{l}$, and 0 otherwise.
\end{itemize}
Note that the last two definitions are the natural generalisation to arbitrary indexing sets of the direct sum of matrices, placing the $\tilde{B}_{i}$ (respectively $G_{i}$) on the diagonal of $\tilde{B}$ (resp.\ $G$) and completing to a matrix by zeroes elsewhere.  The definition of $\mathbf{r}(M)$ is similar but is the multiplicatively skew-symmetric analogue of this.

Via the natural isomorphism $\integ^{\var}\iso \bigdsum_{i\in I} \integ^{\var_{i}}$, it is straightforward to see that $M$ is a toric frame.  Also $\tilde{B}$ is locally finite, as every $\tilde{B}_{i}$ is, and compatibility of $M$ and $\tilde{B}$ follows immediately from the compatibility of $M_{i}$ and $\tilde{B}_{i}$ for all $i$.  Similarly, $G$ is a grading for $\tilde{B}$ and hence $\Sigma$ is a well-defined graded quantum seed.

For any $i\in I$, we have a canonical inclusion $j_{i}\colon \curly{F}_{i}\to \curly{F}$, as the $i$th tensor factor, and clearly this induces a rooted cluster morphism $\CC(\Sigma_{i}) \to \CC(\Sigma)$.

The remainder of the proof exactly follows the argument of \cite{ADS}.  That is, if $\CC(\Theta)$ is a rooted cluster algebra and for any $i\in I$ we have a rooted cluster morphism $g_{i}\colon \CC(\Sigma_{i}) \to \CC(\Theta)$, there exists exactly one graded ring homomorphism $h\colon \curly{F}_{\Sigma} \to \curly{F}_{\Theta}$ satisfying $h(x)=g_{i}(x)$, since tensor products are coproducts. Hence there exists exactly one rooted cluster morphism $h\colon \CC(\Sigma) \to \CC(\Theta)$ satisfying $h\circ j_{i} =g_{i}$ for all $i\in I$ and we are done.
\end{proof}

Taking coproducts of a family $\{\CC(\Sigma_i)\}_{i \in I}$ of rooted cluster algebras amounts to taking what can be intuitively described as the disjoint union $\Sigma$ of their seeds. The seeds $\Sigma_i$ will be full subseeds of the seed $\Sigma$ which are mutually disconnected.  We refer to \cite[Section~3.4]{Gratz-Colimits} for a more in depth discussion of connectedness of seeds and the relationship with coproducts, which also applies in our more general setting. In general, we say that a seed $\Sigma = (M,\tilde{B},G,\var,\ex,\inv)$ is {\em connected}, if for all $k \neq l \in \var$ there exists a finite sequence $i_0, \ldots, i_j \in \var$ such that $k = i_0$, $l = i_j$ and for all $1 \leq m \leq j$ we have
$\tilde{B}_{i_{m}i_{m+1}} \neq 0$.

In particular, as discussed in \cite{Gratz-Colimits}, every connected component of a seed has a countable cluster.  As a consequence, one does not currently gain much from considering uncountable clusters. The usual operations on seeds, namely mutations along (finite) admissible sequences, affect only finitely many connected components and hence only operate on a countable full subseed which is not connected to its invariant complement. So for all practical purposes one can restrict to working with countable seeds without any substantial loss of generality.

\subsection{Monomorphisms and epimorphisms in \texorpdfstring{$\Clus$}{Clus}}

There is a substantial discussion of both monomorphisms and epimorphisms in the category of classical rooted cluster algebras in \cite{ADS}.  We expect that many of the statements, and indeed proofs, can be carried over to the graded quantum setting.  However, we shall not do this here, as our principal goal is the claim of the title.

We note, though, that in \cite{GrabowskiLaunois-Lifting} the first author and Launois prove the following, which is a generalisation of the notion of simple specialisation of \cite[\S 6.1]{ADS}.

Let $\curly{C}(M,\tilde{B},G,\var,\mathbf{ex},\mathbf{inv})_{D}$ be a graded quantum cluster algebra and let $\mathbf{r} = \mathbf{r}(M)$ be the matrix associated to the toric frame $M$.  

Let $\mathbf{z}\subseteq \var\setminus \ex$ be $\mathbf{r}$-central and let $\integ^{\var\setminus \mathbf{z}}$ denote the free Abelian group with basis $\{ e_{j} \mid j\in \var\setminus \mathbf{z} \}$.  Let $\iota\colon \integ^{\var\setminus \mathbf{z}}\to \integ^{\var}$ be the associated natural inclusion.

Set $\bar{\curly{F}}=\mathrm{Fract}(\curly{T}_{\bar{\mathbf{r}}^{\cdot 2}})$, where $\curly{T}_{\bar{\mathbf{r}}^{\cdot 2}}=\curly{T}_{\overline{\mathbf{r}^{\cdot 2}}}$ is defined as the toric frame with respect to $\overline{\mathbf{r}^{\cdot 2}}=\mathbf{r}^{\cdot 2}|_{\var\setminus \mathbf{z}}$.  

Then $\bar{M}\colon \integ^{\var\setminus \mathbf{z}} \to \bar{\curly{F}}$, $\bar{M}=M\circ \iota$ is a toric frame with associated matrix $\mathbf{r}(\bar{M})=\bar{\mathbf{r}}$.  Furthermore, $(\bar{B}=\tilde{B}|_{\var\setminus \mathbf{z}}^{\var\setminus \mathbf{z}},\bar{\mathbf{r}})$ is a compatible pair and the pair $(\bar{M},\bar{B})$ is a quantum seed of $\bar{\curly{F}}$.  

Assume that $\{ Y_{z} \mid z\in \mathbf{z} \}$ are all of degree zero with respect to $G$; set $\bar{G}=G|_{\var \setminus \mathbf{z}}$.

\begin{proposition}
The map 
\[ \pi_{\curly{C}} \colon \curly{C}(M,\tilde{B},G,\var,\mathbf{ex},\mathbf{inv})_{D} \to \curly{C}(\bar{M},\bar{B},\bar{G},\var\setminus \mathbf{z},\mathbf{ex},\mathbf{inv} \setminus \mathbf{z})_{D}
 \]
 defined by 
\[ Y_{i} \mapsto \begin{cases} \bar{Y}_{i} & \text{if}\ i\in \var\setminus \mathbf{z} \\ 1 & \text{if}\ i\in \mathbf{z} \end{cases}
\]
is a surjective algebra homomorphism.
\end{proposition}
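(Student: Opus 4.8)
The plan is to realise $\pi_{\curly C}$ as the restriction of a single specialisation homomorphism on the ambient quantum torus, and then to check that this homomorphism carries $\curly C(M,\tilde B,G,\var,\ex,\inv)_D$ onto $\curly C(\bar M,\bar B,\bar G,\var\setminus\mathbf z,\ex,\inv\setminus\mathbf z)_D$; I abbreviate these to $\curly C(\Sigma)$ and $\curly C(\bar\Sigma)$, with $\Sigma$ and $\bar\Sigma$ the evident seeds. By the quantum Laurent phenomenon every quantum cluster variable of $\curly C(\Sigma)$ is a Laurent polynomial in the $Y_i = M(e_i)$, so $\curly C(\Sigma)\subseteq\phi(\curly T_{\mathbf r^{\cdot 2}})$. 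As $\mathbf z$ is $\mathbf r$-central, the elements $\bar M(e_i)$ for $i\in\var\setminus\mathbf z$, together with $1$ in the slots $z\in\mathbf z$, satisfy in $\bar M(\curly T_{\bar{\mathbf r}^{\cdot 2}})$ precisely the quasi-commutation relations of the $Y_i$ (using $\mathbf r(\bar M) = \mathbf r|_{\var\setminus\mathbf z}$ and the centrality of the $Y_z$); by the universal property of the quantum torus this gives a surjective $\mathbb K$-algebra homomorphism $\psi\colon \phi(\curly T_{\mathbf r^{\cdot 2}}) \to \bar M(\curly T_{\bar{\mathbf r}^{\cdot 2}}) \subseteq \bar{\curly F}$ with $\psi(Y_i) = \bar M(e_i)$ for $i\notin\mathbf z$ and $\psi(Y_z) = 1$. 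The restriction of $\psi$ to $\curly C(\Sigma)$ is then an algebra homomorphism with the prescribed effect on the $Y_i$, hence is exactly the map $\pi_{\curly C}$ (which is thereby well defined); so it remains only to identify its image with $\curly C(\bar\Sigma)$.

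The core of the argument is that $\psi$ intertwines mutation. Since $\ex$ is the same for $\Sigma$ and $\bar\Sigma$, the mutation index sequences (in the sense of Definition~\ref{D:admissible sequence}) that are $\Sigma$-admissible are exactly those that are $\bar\Sigma$-admissible; for such a sequence $\underline k$ let $\Sigma^{\underline k} = (M^{\underline k},\tilde B^{\underline k},\dots)$ and $\bar\Sigma^{\underline k} = (\bar M^{\underline k},\bar B^{\underline k},\dots)$ be the seeds reached from $\Sigma$ and $\bar\Sigma$. I would prove by induction on the length of $\underline k$ that \emph{(a)} $\mathbf z$ is $\mathbf r(M^{\underline k})$-central; \emph{(b)} $\mathbf r(M^{\underline k})|_{\var\setminus\mathbf z} = \mathbf r(\bar M^{\underline k})$ and $\tilde B^{\underline k}|_{\var\setminus\mathbf z}^{\var\setminus\mathbf z} = \bar B^{\underline k}$; and \emph{(c)} $\psi(M^{\underline k}(e_i)) = \bar M^{\underline k}(e_i)$ for all $i\in\var\setminus\mathbf z$ (while $\psi(M^{\underline k}(e_z)) = 1$ is automatic, frozen variables being fixed by mutation, so $M^{\underline k}(e_z) = Y_z$). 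The base case is the standing hypothesis on $\mathbf z$ together with the definitions of $\bar{\mathbf r}$ and $\bar B$. In the inductive step, mutating at some $k\in\ex\subseteq\var\setminus\mathbf z$: part \emph{(a)} holds because, as $k\notin\mathbf z$, the columns of $E$ indexed by $\mathbf z$ equal $e_z$, so $\mu_k(\mathbf r)_{iz}$ and $\mu_k(\mathbf r)_{zj}$ are products of entries $\mathbf r_{pz}$ and $\mathbf r_{zq}$, all equal to $1$ by \emph{(a)}; part \emph{(b)} holds because $E$ and $F$ differ from the identity matrix only in the column, respectively the row, indexed by $k\in\var\setminus\mathbf z$, so $\mu_k(\tilde B) = E\tilde B F$ and the mutation of $\mathbf r$ commute with restriction to $\var\setminus\mathbf z$, using \emph{(b)} inductively; and for part \emph{(c)} the case $i\neq k$ is immediate as $M^{\underline k}$ and $\mu_k(M^{\underline k})$ agree at $e_i$, while the case $i = k$ follows by applying $\psi$ to the exchange relation of Corollary~\ref{cor:mutation}, splitting each exponent vector $-e_k + [b^{k}]_{\pm}$ (formed from $\tilde B^{\underline k}$) into its part supported on $\mathbf z$ and its part supported on $\var\setminus\mathbf z$: by \emph{(a)} the first factors out as a monomial in the $Y_z$ which $\psi$ sends to $1$, the second is $-e_k + [b^{k}]_{\pm}$ formed from $\bar B^{\underline k}$ by \emph{(b)}, and the normalisation scalars attached to $\mathbf r(M^{\underline k})$ and $\mathbf r(\bar M^{\underline k})$ agree by \emph{(a)} and \emph{(b)}; hence \emph{(c)} at the previous level sends the two summands to those of $\mu_k(\bar M^{\underline k})(e_k)$.

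Granting the claim, the proposition follows quickly. The algebra $\curly C(\bar\Sigma)$ is generated over $D$ by its quantum cluster variables $\bar M^{\underline k}(e_i)$ together with the inverses $\bar M(e_l)^{-1}$ for $l\in\inv\setminus\mathbf z$; by \emph{(c)} each $\bar M^{\underline k}(e_i) = \psi(M^{\underline k}(e_i))$ with $M^{\underline k}(e_i)\in\curly C(\Sigma)$, and $\bar M(e_l)^{-1} = \psi(M(e_l)^{-1})$ with $M(e_l)^{-1}\in\curly C(\Sigma)$ since $l\in\inv$, so $\pi_{\curly C}$ is surjective; and the same computations, read in the other direction, show that $\psi$ carries every generator of $\curly C(\Sigma)$ into $\curly C(\bar\Sigma)$, so $\pi_{\curly C}$ does land in $\curly C(\bar\Sigma)$. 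I expect the one genuinely delicate point to be the inductive step \emph{(c)} for $i = k$: compatibility of specialisation with the binomial exchange relation, with careful tracking of the normalisation scalars, together with the minor nuisance that $\psi$ is a priori defined only on the initial quantum torus $\phi(\curly T_{\mathbf r^{\cdot 2}})$ rather than on each mutated one. The latter is resolved most cleanly by first multiplying the exchange relation through by $M^{\underline k}(e_k)\in\curly C(\Sigma)$ to clear its single denominator before applying $\psi$, or by noting that the specialisation maps associated with all the mutated quantum tori restrict to $\psi$ on $\curly C(\Sigma)$. Throughout, it is precisely the $\mathbf r$-centrality of $\mathbf z$, propagated through mutation by part \emph{(a)}, that makes the argument work.
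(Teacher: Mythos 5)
The paper itself gives no proof of this proposition: it is quoted from \cite{GrabowskiLaunois-Lifting}, where the first author and Launois prove it, so there is no internal argument to compare yours against. That said, your proposal is sound and is essentially the argument one would expect (it is the graded quantum analogue of the ``simple specialisation'' of \cite[\S 6.1]{ADS}): define the specialisation $\psi$ on the initial quantum torus using the universal property and the $\mathbf{r}$-centrality of $\mathbf{z}$, use the quantum Laurent phenomenon to see that $\curly{C}(\Sigma)$ sits inside that torus, and then propagate centrality and compatibility of the data through mutation to show $\psi$ intertwines the exchange relations, whence both well-definedness of the target and surjectivity. Your inductive claims (a)--(c) all check out: for (a), since $k\in\ex$ and $\mathbf{z}\subseteq\var\setminus\ex$, the columns of $E$ indexed by $\mathbf{z}$ are standard basis vectors and every factor $\mathbf{r}_{pz}$ equals $1$; for (b), $E$ and $F$ deviate from the identity only in positions indexed by $k\notin\mathbf{z}$; and for (c), the normalisation scalars $\curly{S}_{\mathbf{r}}(a)$ and the bicharacter values $\Omega_{\mathbf{r}}(e_k,\cdot)$ are insensitive to the $\mathbf{z}$-components of exponent vectors, again by centrality. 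You correctly identify the one genuinely delicate point---that the summands $M^{\underline{k}}(-e_k+[b^k]_{\pm})$ of the exchange relation live in the \emph{mutated} quantum torus rather than the initial one on which $\psi$ is defined---and your fix of multiplying through by $M^{\underline{k}}(e_k)$ to obtain an identity among nonnegative monomials, applying $\psi$, and then cancelling the invertible element $\bar{M}^{\underline{k}}(e_k)$ in the division algebra $\bar{\curly{F}}$, is correct. I see no gap beyond routine verifications.
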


In the nomenclature of \cite{ADS}, $\pi_{\curly{C}}$ is an ideal surjective rooted cluster morphism.

The condition that $\mathbf{z}\subseteq \var\setminus \ex$, i.e.\ consists of frozen indices, can likely be relaxed, cf.\ \cite[Corollary~6.4]{ADS}.
  
\section[Rooted cluster algebras of infinite rank as colimits of rooted cluster algebras of finite rank]{Rooted cluster algebras of infinite rank as colimits of rooted cluster algebras of finite rank
\sectionmark{Rooted cluster algebras of infinite rank}
}\label{S:colimits}
\sectionmark{Rooted cluster algebras of infinite rank}

In this section, we show that every rooted cluster algebra of infinite rank can be written as a linear colimit of rooted cluster algebras of finite rank. This yields a formal way to manipulate cluster algebras of infinite rank by viewing them locally as cluster algebras of finite rank. 

\subsection{Colimits and limits in \texorpdfstring{$\Clus$}{Clus}}

We start by recalling the definition of colimits and limits (their dual notion). Let $\CC$ and $\J$ be categories and let $F \colon \J \to \CC$ be a diagram of type $\J$ in the category $\CC$, i.e.\ a functor from $\J$ to $\CC$. 

The {\em colimit $\mathrm{colim} (F)$ of $F$} (if it exists) is an object $\mathrm{colim} (F) \in \CC$ together with a family of morphisms $f_i\colon F(i) \to \mathrm{colim} (F)$ in $\CC$ indexed by the objects $i \in \J$ such that for any morphism $f_{ij}\colon i \to j$ in $\J$ we have $f_j \circ F(f_{ij}) = f_i$ and for any object $C \in \CC$ with a family of morphisms $g_i\colon F(i) \to C$ in $\CC$ for objects $i \in \J$  such that $g_j \circ F(f_{ij}) = g_i$ for all morphisms $f_{ij}\colon i \to j$ in $\J$, there exists a unique morphism $h \colon \mathrm{colim} (F) \to C$ such that the following diagram commutes.
\[
\xymatrix{& C  & \\ & \mathrm{colim}( F) \ar[u]^h & \\ F(i) \ar[ruu]^{g_i} \ar[ru]_{f_i} \ar[rr]^{F(f_{ij})} && F(j) \ar[luu]_{g_j} \ar[lu]^{f_j}}
\]
The {\em limit $\mathrm{lim} (F)$ of $F$} (if it exists) is defined dually.

A category is called {\em complete}, respectively {\em cocomplete}, if it has all small limits, respectively colimits.

\subsection{Rooted cluster algebras of infinite rank as colimits} 
We will now show our main result that there are sufficient colimits such that every graded quantum rooted cluster algebra of infinite rank is isomorphic to a colimit of graded quantum rooted cluster algebras of finite rank. 

When the initial seed of an infinite rank rooted cluster algebra is connected, we can even write it as a linear colimit. A colimit $\mathrm{colim}(F)$ in a category $\CC$ is called {\em linear} if the index category $\J$ of the diagram $F \colon \J \to \CC$ is a set endowed with a linear order viewed as a category. A diagram $F \colon \J \to \CC$ where $\J$ is endowed with a linear order $\leq$ is just a {\em linear system} of objects in $\CC$, that is a family of objects $\{C_i\}_{i \in \J}$ and a family of morphisms $\{ f_{ij}\}_{i  \leq j \in \J}$ such that $f_{jk} \circ f_{ij} = f_{ik}$ and $f_{ii} = \id_{C_i}$ for all $i\leq j \leq k$ in $\J$. In order to explicitly construct a suitable linear system of rooted cluster algebras of finite rank, we use the fact that in certain nice cases inclusions of subseeds give rise to rooted cluster morphisms.

\begin{definition}
	Let $\Sigma = (M, \tilde{B}, G, \var, \ex, \inv)$ in $\curly{F}$ and $\Sigma' = (M', \tilde{B}', G', \var', \ex', \inv')$ in $\curly{F}'$ be graded quantum seeds. We say that $\Sigma$ is a {\em full subseed} of $\Sigma'$ if
		\begin{enumerate}
			\item $\var \subseteq \var'$, $\ex \subseteq \ex'$ and $\inv \subseteq \inv'$
			\item $\curly{F} \subseteq \curly{F}'$
			\item $\tilde{B}' \mid_{\var}^{\var} = \tilde{B}$
			\item $M'(a) = M(a)$ for all $a \in \ZZ^{\var'}$ with $\supp(a) \subseteq \var$.
			\item $G'$ is a $(\var' \times I)$-grading and $G$ is a $(\var \times I)$-grading with $G' \mid_{\var} = G$.
		\end{enumerate}
	If $\Sigma$ is a full subseed of $\Sigma'$, we say that $\Sigma$ and $\Sigma'$ are connected only by coefficients in $\Sigma$ if additionally $\tilde{B}'_{ij} = 0$ for all $i \in \ex$ and $j \in \var' \setminus \var$.  
\end{definition}

\begin{remark}
	The definition of full subseed is in line with \cite{Gratz-Colimits}, but is slightly more general than the one given in \cite{ADS}: We allow frozen variables in $\Sigma$ to be exchangeable in $\Sigma'$, whereas in \cite{ADS}, 
	a seed  $\Sigma$ is only called a full subseed of $\Sigma'$ if all of the above conditions for a full subseed are met and additionally all frozen variables in $\Sigma$ are frozen variables in $\Sigma'$.
\end{remark}

\begin{remark} To check the condition of being connected only by coefficients, particularly for quivers, the contrapositive statement is perhaps more instructive: $\Sigma$ and $\Sigma'$ are connected only by coefficients if for all $i\in \ex$, if $\tilde{B}_{ij}\neq 0$ then $j\in \var$.  That is, looking at a mutable vertex $i$ in the quiver for $\Sigma$, viewed as a full subquiver of the quiver for $\Sigma'$, all arrows to or from $i$ should be to vertices that already belonged to $\Sigma$.
\end{remark}

\begin{lemma}\label{L:full subseeds}
	Let 
		\[
			\Sigma = (M, \tilde{B}, G, \var, \ex, \inv)
		\]
	in $\curly{F}$ be a full subseed of 
		\[
			\Sigma' = (M', \tilde{B}', G', \var', \ex', \inv')
		\]
	in $\curly{F}'$ and let them be connected only by coefficients in $\Sigma$. Let $(x_1, \ldots, x_l)$
	be a $\Sigma$-admissible sequence. Then $(x_1, \ldots, x_l)$ is a $\Sigma'$-admissible sequence and $\tilde{\Sigma} = \mu_{i_{x_l}} \circ \ldots \circ \mu_{i_{x_1}}(\Sigma)$ is a full subseed of $\tilde{\Sigma}' = 
	\mu_{i_{x_l}} \circ \ldots \circ \mu_{i_{x_1}}(\Sigma')$, where $\tilde{\Sigma}$ and $\tilde{\Sigma}'$ are only connected by coefficients in $\tilde{\Sigma}$.
\end{lemma}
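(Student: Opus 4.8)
The plan is to prove the lemma by induction on the length $l$ of the $\Sigma$-admissible sequence, the case $l=0$ being trivial, so that everything reduces to the single step case. Concretely, the statement to establish is: if $\Sigma$ is a full subseed of $\Sigma'$ connected only by coefficients in $\Sigma$ and $k\in\ex$, then $M'(e_i)=M(e_i)$ for all $i\in\var$ (immediate from the full-subseed condition, and implying in particular that the first entry $M(e_{i_{x_1}})$ of a $\Sigma$-admissible sequence is again $\Sigma'$-admissible, since $\ex\subseteq\ex'$), and moreover $\mu_k(\Sigma)$ is a full subseed of $\mu_k(\Sigma')$ connected only by coefficients in $\mu_k(\Sigma)$. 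Granting this, the induction is routine: the full-subseed relation at each stage forces the current mutations of $M$ and $M'$ to agree on $\integ^{\var}$, so the next cluster variable $x_m$ is simultaneously a cluster variable of the corresponding mutation of $\Sigma'$ and the sequence stays $\Sigma'$-admissible, while a further application of the single step statement propagates the full-subseed relation and the connectedness condition. I would emphasise that the induction must carry all three conclusions of the lemma at once, as each depends on the others.

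The key structural input, which I would isolate first, is this: since $\Sigma$ and $\Sigma'$ are connected only by coefficients in $\Sigma$ and $k\in\ex$, the $k$-th row of $\tilde{B}'$ is supported in $\var$; sign-skew-symmetry of $\tilde{B}'$ then forces the $k$-th column of $\tilde{B}'$ to be supported in $\var$ as well, and on $\var$ both coincide with the $k$-th row and column of $\tilde{B}$. From this one reads off that the matrices $E'$ and $F'$ associated to $\tilde{B}'$ and $k$ are block diagonal with respect to the decomposition $\var'=\var\sqcup(\var'\setminus\var)$, their cross blocks vanishing, with $\var\times\var$ blocks equal to the matrices $E$ and $F$ associated to $\tilde{B}$ and $k$. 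This is the only place where the hypothesis ``connected only by coefficients'' is genuinely used, and I expect it to be the main obstacle in the sense that, once it is established, the rest is essentially formal.

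With this in hand I would check the five full-subseed conditions for $\mu_k(\Sigma)\subseteq\mu_k(\Sigma')$ in turn: (1) and (2) hold because $\var,\ex,\inv$ and the division algebras are unchanged by mutation; (3) follows from $\mu_k(\tilde{B}')=E'\tilde{B}'F'$, block diagonality, and condition (3) for $\Sigma,\Sigma'$; (5) follows from $\mu_k(G')=(E')^{T}G'$ in the same way. For (4), Corollary~\ref{cor:mutation} gives $\mu_k(M')(e_j)=M'(e_j)=M(e_j)=\mu_k(M)(e_j)$ for $j\neq k$, and expresses $\mu_k(M')(e_k)$ in terms of $M'$ evaluated at $-e_k+[{b'}^{k}]_{+}$ and $-e_k+[{b'}^{k}]_{-}$; by the structural input these exponent vectors are supported in $\var$ and equal $-e_k+[b^{k}]_{+}$ and $-e_k+[b^{k}]_{-}$, so condition (4) for $\Sigma,\Sigma'$ yields $\mu_k(M')(e_k)=\mu_k(M)(e_k)$. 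Since a toric frame is recovered from its multiplicatively skew-symmetric matrix together with its values on the standard basis vectors, and $\mathbf{r}(\mu_k(M'))|_{\var}^{\var}=\mathbf{r}(\mu_k(M))$ by another instance of the block-diagonality computation, it follows that $\mu_k(M')(a)=\mu_k(M)(a)$ whenever $\supp(a)\subseteq\var$. Finally, to see that $\mu_k(\Sigma)$ and $\mu_k(\Sigma')$ are connected only by coefficients in $\mu_k(\Sigma)$, I would check directly that $\mu_k(\tilde{B}')_{ij}=0$ for $i\in\ex$ and $j\in\var'\setminus\var$: if $i=k$ this entry is $-\tilde{B}'_{kj}=0$, and if $i\neq k$ both the term $\tilde{B}'_{ij}$ and the correction term (which carries $\tilde{B}'_{kj}$ as a factor) vanish, since $i,k\in\ex$ and $j\notin\var$.

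Beyond the structural input, the points I would be careful about are that a toric frame is not $\integ$-linear, so (4) has to be argued from agreement on basis vectors together with agreement of the associated matrices rather than by a linearity argument, and that (as already noted) the $\Sigma'$-admissibility of the sequence, the full-subseed relation, and the ``connected only by coefficients'' property have to be established simultaneously in the induction.
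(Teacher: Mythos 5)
Your proposal is correct and follows essentially the same route as the paper: induction on the length of the admissible sequence, with the key observation that the ``connected only by coefficients'' hypothesis together with sign-skew-symmetry forces the $k$-th row and column of $\tilde{B}'$ to be supported in $\var$, so that mutation of the exchange matrix, toric frame and grading restricts to the corresponding mutation on $\Sigma$. Your packaging of this via block-diagonality of $E'$ and $F'$ is only a cosmetic variant of the paper's entrywise computations, and your remarks about extending agreement of the toric frames from basis vectors to all exponent vectors supported in $\var$ match the paper's use of the equality $\curly{S}_{\mathbf{r}(M')}(a)=\curly{S}_{\mathbf{r}(M)}(a)$.
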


\begin{proof}
	We prove the claim by induction over the length $l$ of the $\Sigma$-admissible sequence \\$(x_1, \ldots, x_l)$. The statement is true for $l = 0$ by assumption. Assume now the statement is true for all $\Sigma$-admissible
	sequences of length $l$ and consider a $\Sigma$-admissible sequence $(x_1, \ldots, x_{l+1})$. We have $x_{l+1} = \mu_{i_{x_l}} \circ \ldots \circ \mu_{i_{x_1}}(M)(e_{i_{x_{l+1}}})$ for some $i_{x_{l+1}} \in \ex$. By the inductive hypothesis, we have $x_{l+1} = \mu_{i_{x_l}} \circ \ldots \circ \mu_{i_{x_1}}(M)(e_{i_{x_{l+1}}}) = \mu_{i_{x_l}} \circ \ldots \circ \mu_{i_{x_1}}(M')(e_{i_{x_{l+1}}})$ and thus $(x_1, \ldots, x_{l+1})$ is $\Sigma'$-admissible.
	
	Set now $\mu_{i_{x_l}} \circ \ldots \circ \mu_{i_{x_1}}(\Sigma) = \Sigma^l = (M^l,\tilde{B}^l,G^l, \var, \ex, \inv)$ and $\mu_{i_{x_l}} \circ \ldots \circ \mu_{i_{x_1}}(\Sigma') = \Sigma'^l = (M'^l,\tilde{B}'^l,G'^l, \var', \ex', \inv')$. 
	By the inductive hypothesis $\Sigma^l$ is a full subseed of $\Sigma'^l$ and they are only connected by coefficients in $\Sigma^l$.

	We have 
		\[
			\tilde{\Sigma} = (\mu_{i_{x_{l+1}}}(M^l),\mu_{i_{x_{l+1}}}(\tilde{B}^l),\mu_{i_{x_{l+1}}}(G^l), \var, \ex, \inv)
		\] 
	and 
		\[
			\tilde{\Sigma}' = (\mu_{i_{x_{l+1}}}(M'^l),\mu_{i_{x_{l+1}}}(\tilde{B}'^l),\mu_{i_{x_{l+1}}}(G'^l), \var', \ex', \inv').
		\] 
	
	We have $\var \subseteq \var'$, $\ex \subseteq \ex'$ and $\inv \subseteq \inv'$ and $\curly{F} \subseteq \curly{F}'$ by assumption. For notational convenience set $k = i_{x_{l+1}}$.
	
	Consider first $\mu_k(\tilde{B}'^l)$: We have
		\[
			\mu_k(\tilde{B}'^l)_{ij} = 	\begin{cases}
									-b'^l_{ij} \text{ if } i = k \text{ or } j = k \\
									b'^l_{ij} + \frac{1}{2}(|b'^l_{ik}|b'^l_{kj} + b'^l_{ik}|b'^l_{kj}|), \text{ otherwise.}
								\end{cases}
		\]
	Comparing with $\mu_k(\tilde{B}^l)$ yields $\mu_k(\tilde{B}'^l)_{ij} = \mu_k(\tilde{B}^l)_{ij}$ if $i,j \in \var$. If $i \in \ex$ and $j \in \var' \setminus \var$, then the inductive hypothesis (bearing in mind that $k \in \ex$) yields
	$\mu_k(\tilde{B}'^l)_{ij} = 0$.
	
	Consider now $\mu_k(M'^l)$: We have $\mu_k(M'^l) (e_i) = M'^l(e_i) = M^l(e_i)$ for all $i \in \var \setminus \{k\}$. For $i = k$ we get
\[ \mu_k(M'^l) (e_k) = M'^l(-e_k + [(b'^l)^k]_+) + M'^l(-e_k + [(b'^l)^k]_-).\]  By the inductive hypothesis 	on the exchange matrices we have $-e_k + [(b'^l)^k]_+ = -e_k + [(b^l)^k]_+$\linebreak and $-e_k + [(b'^l)^k]_- = -e_k + [(b^l)^k]_-$ and thus $\supp(-e_k + [(b'^l)^k]_+) \subseteq \var$ and\linebreak $\supp(-e_k + [(b'^l)^k]_-) \subseteq \var$. By the inductive hypothesis on the toric frames we have
\[ \mu_k(M'^l) (e_k) = M^l(-e_k + [(b^l)^k]_+) + M^l(-e_k + [(b^l)^k]_-) = \mu_k(M^l)(e_k).\] Since the values of the entries in the matrices $\mathbf{r}(M')\mid_{\var}^{\var}$ and $\mathbf{r}(M)$ are equal, with $\mathbf{r}(M)_{ij}=M(e_{i})M(e_{j})M(e_{i}+e_{j})^{-1}=\mathbf{r}(M')_{ij}$ for $i,j\in \var$, we have $\curly{S}_{\mathbf{r}(M')}(a) = \curly{S}_{\mathbf{r}(M)}(a)$ for all $a \in \mathbb{Z}^{\var'}$ with $\supp(a) 		\subseteq \var$. Thus the toric frame $M'$ takes the same values as $M$ on all elements of $\mathbb{Z}^{\var'}$ with support in $\var$.
	
	Finally, consider $\mu_k(G'^l)$: It is straightforward to check (cf.\ for example Section 3 in \cite{GradedClusterAlgebras}) that for the rows of $\mu_k(G'^l)$ we have $\mu_k(G'^l)_i = G'^l_i$ if $i \neq k$ and 
	$\mu_k(G'^l)_k = (-e_k - [(b'^l)^k]_-)^T G$ and similarly, $\mu_k(G^l)_i = G^l_i$ if $i \neq k$ and $\mu_k(G^l)_k = (-e_k - [(b^l)^k]_-)^T G$. By the the inductive hypothesis on the exchange matrices it follows directly
	that $\mu_k(G'^l) \mid_{\var} = \mu_k(G^l)$.
	\end{proof}

In general, if $\Sigma$ is a full subseed of $\Sigma'$, the natural inclusion $\curly{F} \to \curly{F}'$ does not give rise to a rooted cluster morphism $\CC(\Sigma) \to \CC(\Sigma')$, even in the classical setting (see \cite[Remark~4.10]{ADS}). However, it does if $\Sigma$ and $\Sigma'$ are connected only by coefficients in $\Sigma$ .

\begin{theorem}\label{T:inclusion morphism}
	Let 
		\[
			\Sigma = (M, \tilde{B}, G, \var, \ex, \inv)
		\]
	in $\curly{F}$ be a full subseed of 
		\[
			\Sigma' = (M', \tilde{B}', G', \var', \ex', \inv')
		\]
	in $\curly{F}'$, connected only by coefficients in $\Sigma$. Then the natural inclusion $f \colon \curly{F} \to \curly{F}'$ restricts to a rooted cluster morphism $f \colon \mathcal{C}(\Sigma) \to \mathcal{C}(\Sigma')$.
\end{theorem}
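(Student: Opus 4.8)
The plan is to verify the three axioms \ref{CM1}, \ref{CM2prime} and \ref{CM3} directly, using Lemma~\ref{L:full subseeds} as the engine for the mutation compatibility.

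\textbf{Setup and first axioms.} First I would observe that since $\Sigma$ is a full subseed of $\Sigma'$, condition (4) of the definition gives $M'(e_i) = M(e_i)$ for all $i \in \var$, so the natural inclusion $f \colon \curly{F} \to \curly{F}'$ sends $M(e_i)$ to $M'(e_i) \in \cl(\Sigma')$ for every $i \in \var$; in particular $f(\cl(\Sigma)) \subseteq \cl(\Sigma')$, which is stronger than \ref{CM1}. Similarly, since $\ex \subseteq \ex'$, we get $f(\cl_{\ex}(\Sigma)) = \{ M'(e_i) \mid i \in \ex \} \subseteq \cl_{\ex'}(\Sigma')$, which is exactly \ref{CM2prime}. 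I also need to check that $f$ restricts to a well-defined algebra homomorphism $\curly{C}(\Sigma) \to \curly{C}(\Sigma')$: the generators of $\curly{C}(\Sigma)$ are the quantum cluster variables in $\curly{V}(\Sigma)$ together with $\{ M(e_l)^{-1} \mid l \in \inv \}$. By Lemma~\ref{L:full subseeds}, every $\Sigma$-admissible sequence is $\Sigma'$-admissible and the mutated seeds $\tilde\Sigma$ are full subseeds of $\tilde\Sigma'$, so every element of $\curly{V}(\Sigma)$ is, under $f$, a quantum cluster variable of some seed in $\curly{S}(\Sigma')$, hence lies in $\curly{V}(\Sigma') \subseteq \curly{C}(\Sigma')$; and for $l \in \inv \subseteq \inv'$, the element $M(e_l)^{-1} = M'(e_l)^{-1}$ lies in $\curly{C}(\Sigma')$. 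It also needs noting that $f$ is graded of degree zero, which follows from condition (5), $G' \mid_{\var} = G$, since the degree of $M(e_i)$ computed from $G$ agrees with the degree of $M'(e_i)$ computed from $G'$, and this extends to all of $\curly{C}(\Sigma)$ by the discussion following Definition~\ref{d:grading}.

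\textbf{The mutation axiom.} The substance is \ref{CM3}. Let $(x_1, \ldots, x_l)$ be an $(f, \Sigma, \Sigma')$-biadmissible sequence; by Lemma~\ref{L:full subseeds} every $\Sigma$-admissible sequence is automatically $\Sigma'$-admissible, so biadmissibility is no extra constraint, but crucially the mutation index $i_{x_k} \in \ex$ of $x_k$ in $\Sigma$ equals the mutation index $i_{f(x_k)}$ of $f(x_k)$ in $\Sigma'$ (since $f$ is the identity on $\var$-indexed data: $x_k = \mu_{i_{x_{k-1}}} \cdots \mu_{i_{x_1}}(M)(e_{i_{x_k}}) = \mu_{i_{x_{k-1}}} \cdots \mu_{i_{x_1}}(M')(e_{i_{x_k}}) = f(x_k)$, using the inductive identity from the proof of Lemma~\ref{L:full subseeds}). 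Then for $y \in \cl(\Sigma)$ with $f(y) \in \cl(\Sigma')$, write $y = M(e_j)$, $j \in \var$, so $f(y) = M'(e_j)$. Set $\Sigma^l = \mu_{i_{x_l}} \cdots \mu_{i_{x_1}}(\Sigma) = (M^l, \ldots)$ and $\Sigma'^l = \mu_{i_{x_l}} \cdots \mu_{i_{x_1}}(\Sigma') = (M'^l, \ldots)$; by Lemma~\ref{L:full subseeds}, $\Sigma^l$ is a full subseed of $\Sigma'^l$, so by condition (4) applied to this pair, $M'^l(e_j) = M^l(e_j)$ for $j \in \var$. Therefore
\[
\mu_{i_{f(x_l)}} \circ \cdots \circ \mu_{i_{f(x_1)}}(f(y)) = \mu_{i_{x_l}} \circ \cdots \circ \mu_{i_{x_1}}(M')(e_j) = M'^l(e_j) = M^l(e_j) = \mu_{i_{x_l}} \circ \cdots \circ \mu_{i_{x_1}}(M)(e_j),
\]
and applying $f$ (which is the identity on $\curly{F}$'s image in $\curly{F}'$) to the last expression gives $f(\mu_{i_{x_l}} \circ \cdots \circ \mu_{i_{x_1}}(y))$. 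This is precisely \ref{CM3}.

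\textbf{Main obstacle.} The one point requiring genuine care, rather than bookkeeping, is making sure that ``$f$ is the identity on the relevant variables'' is used consistently: the equality $M^l(e_j) = M'^l(e_j)$ for $j \in \var$ is \emph{only} guaranteed because $\Sigma^l$ and $\Sigma'^l$ are connected only by coefficients in $\Sigma^l$ — without that hypothesis, Lemma~\ref{L:full subseeds} fails and the mutated toric frames on the two sides could genuinely differ (this is exactly the phenomenon behind \cite[Remark~4.10]{ADS}). So the crux of the argument is that Lemma~\ref{L:full subseeds} has already done the hard work, and the proof of Theorem~\ref{T:inclusion morphism} is essentially the observation that its conclusion says exactly what \ref{CM3} needs. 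The remaining details — that $f$ lands in $\curly{C}(\Sigma')$ and is graded of degree zero — are routine consequences of conditions (1)–(5) in the definition of full subseed.
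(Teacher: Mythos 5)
Your proposal is correct and follows essentially the same route as the paper: both verify that $f$ maps $\curly{V}(\Sigma)$ and the inverted frozen variables into $\CC(\Sigma')$, that $f$ is graded, and that \ref{CM1}, \ref{CM2prime} and \ref{CM3} hold, with Lemma~\ref{L:full subseeds} doing all the real work. Your write-up merely spells out in more detail the \ref{CM3} computation and the matching of mutation indices, which the paper's proof leaves implicit.
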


\begin{proof}
	By Lemma \ref{L:full subseeds}, every quantum cluster variable $x \in \curly{V}(\Sigma)$ is a quantum cluster variable in $\curly{V}(\Sigma')$ and 
	$f(x) = x$. Furthermore, since $\inv \subseteq \inv'$, we have 
\[ \{M(e_l)^{-1} \mid i \in \inv \} = \{M'(e_l)^{-1} \mid i \in \inv \} \subseteq \{M'(e_l)^{-1} \mid i \in \inv'\}.\] Thus, the image of $\CC(\Sigma)$ under $f$ lies in $\CC(\Sigma')$ 
	and by Lemma \ref{L:full subseeds}, $f$ respects the gradings of all cluster variables and thus it is a graded $\mathbb{K}$-algebra homomorphism. 
	Axioms \ref{CM1} and \ref{CM2prime} are satisfied by definition and axiom \ref{CM3} is satisfied by Lemma \ref{L:full subseeds}.
\end{proof}

For any given rooted cluster algebra $\CC(\Sigma)$ we can build a linear system $\{\CC(\Sigma_i)\}_{i \in \ZZ}$ of rooted cluster algebras whose initial seeds are finite full subseeds $\Sigma_i$ of $\Sigma$ such that for all $i \in \ZZ$, the seeds $\Sigma_i$ and $\Sigma$ are only connected by coefficients of $\Sigma_i$. Further, we can construct it in a way such that for all $i \leq j$ the seed $\Sigma_i$ is a full subseed of $\Sigma_j$ and the two are connected only by coefficients of $\Sigma_i$. This construction yields a linear system of rooted cluster algebras of finite rank which has the desired rooted cluster algebra $\CC(\Sigma)$ as its colimit.

\begin{theorem}\label{T:connected colimit}
Every rooted graded quantum cluster algebra is isomorphic to a colimit of rooted graded quantum cluster algebras of finite rank in the category $\Clus$.
\end{theorem}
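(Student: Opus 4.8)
The plan is to reduce to the connected case and then build an explicit linear system whose colimit is the given rooted cluster algebra. First I would observe that, by the coproduct construction (the Lemma on coproducts in $\Clus$) together with the discussion of connectedness, any seed $\Sigma$ decomposes as the disjoint union of its connected components $\Sigma^{(c)}$, and hence $\CC(\Sigma)$ is the coproduct $\bigsqcup_c \CC(\Sigma^{(c)})$; a coproduct is a colimit, and a colimit of colimits is a colimit, so it suffices to treat the case where $\Sigma$ is connected. In that case, by the remarks after the coproduct Lemma, $\var$ is countable, so write $\var = \{ v_0, v_1, v_2, \dotsc \}$ (finite or countably infinite; if finite the theorem is trivial with the constant diagram, so assume countably infinite).

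Next I would construct the linear system indexed by $\ZZ$ (or equivalently $\mathbb{N}$ with the usual order, viewed as a category). For each $i$, I want a finite subset $\var_i \subseteq \var$ with $\var_0 \subseteq \var_1 \subseteq \dotsb$ and $\bigcup_i \var_i = \var$, chosen so that the full subseed $\Sigma_i$ of $\Sigma$ supported on $\var_i$ is connected only by coefficients in $\Sigma_i$ to $\Sigma$, and moreover so that $\Sigma_i$ is a full subseed of $\Sigma_j$ connected only by coefficients in $\Sigma_i$ whenever $i \leq j$. Concretely: enlarge $\ex_i := \ex \cap \var_i$ to be exactly those indices that will remain exchangeable, and declare every other index of $\var_i$ frozen in $\Sigma_i$ (this is permitted by the more general notion of full subseed adopted here, and is exactly why that generality was introduced). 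The key point is that for $k \in \ex_i$, local finiteness of $\tilde B$ means only finitely many $j$ have $\tilde B_{kj} \neq 0$; so one builds $\var_i$ by starting from $\{v_0, \dotsc, v_i\}$ and closing up under "all neighbours of currently-exchangeable vertices", keeping that set of exchangeable vertices fixed and finite, and freezing the boundary. One must check this can be done so that $\var_i \subseteq \var_{i+1}$, that $\ex_i \subseteq \ex_{i+1}$, that eventually every $v_n$ lies in some $\var_i$ as an exchangeable vertex when $v_n \in \ex$ (and as a frozen vertex when $v_n \in \var \setminus \ex$), and that $\inv_i := \inv \cap \var_i$ works; also that $G_i := G|_{\var_i}$ remains a grading for $\tilde B_i = \tilde B|_{\var_i}^{\var_i}$, which holds because $\tilde B_i^T G_i = 0$ follows from $\tilde B^T G = 0$ once the relevant columns of $\tilde B$ are supported in $\var_i$ — precisely the connected-only-by-coefficients condition. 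By Theorem \ref{T:inclusion morphism}, the inclusions $\curly{F}_i \hookrightarrow \curly{F}_j$ restrict to rooted cluster morphisms $f_{ij} \colon \CC(\Sigma_i) \to \CC(\Sigma_j)$, and functoriality ($f_{jk} \circ f_{ij} = f_{ik}$, $f_{ii} = \id$) is immediate since these are literally inclusions of subalgebras of $\curly{F}$. Likewise the inclusions $\curly{F}_i \hookrightarrow \curly{F}$ restrict to rooted cluster morphisms $f_i \colon \CC(\Sigma_i) \to \CC(\Sigma)$ with $f_j \circ f_{ij} = f_i$.

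It then remains to verify the universal property: given any rooted cluster algebra $\CC(\Theta)$ and compatible morphisms $g_i \colon \CC(\Sigma_i) \to \CC(\Theta)$ with $g_j \circ f_{ij} = g_i$, there is a unique rooted cluster morphism $h \colon \CC(\Sigma) \to \CC(\Theta)$ with $h \circ f_i = g_i$. For existence: every cluster variable of $\Sigma$ lies in some $\CC(\Sigma_i)$ — indeed by Lemma \ref{L:full subseeds} every $\Sigma$-admissible mutation sequence, being finite and hence touching only finitely many indices, is $\Sigma_i$-admissible for $i$ large enough, so $\curly{V}(\Sigma) = \bigcup_i f_i(\curly{V}(\Sigma_i))$, and similarly for the inverted coefficients; define $h$ on each such generator by $g_i$, check this is well-defined using compatibility $g_j \circ f_{ij} = g_i$, and check it extends to a $\mathbb{K}$-algebra homomorphism on $\CC(\Sigma)$ (the quasi-commutation and grading data on $\CC(\Sigma)$ is the colimit of that on the $\CC(\Sigma_i)$, since each relation involves finitely many variables). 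Then verify \ref{CM1}, \ref{CM2prime}, \ref{CM3} for $h$: each axiom is a statement about finitely many cluster variables / a single finite admissible sequence at a time, and on such finite data $h$ agrees with some $g_i$, which satisfies the axioms. Uniqueness is forced because $\CC(\Sigma)$ is generated as a $\mathbb{K}$-algebra by $\bigcup_i f_i(\CC(\Sigma_i))$ and $h$ is determined on each $f_i(\CC(\Sigma_i))$ by $h \circ f_i = g_i$.

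The main obstacle I expect is the combinatorial bookkeeping in the second paragraph: arranging the finite subseeds $\Sigma_i$ so that simultaneously (a) they exhaust $\Sigma$, (b) each $\Sigma_i$ is a full subseed of $\Sigma_{i+1}$ connected only by coefficients in $\Sigma_i$ — which constrains how the exchangeable set may grow — and (c) each $\Sigma_i$ is connected only by coefficients in $\Sigma_i$ to the full $\Sigma$. Local finiteness of $\tilde B$ is exactly the hypothesis that makes this possible (the "close up under neighbours of exchangeable vertices" step terminates), but one has to be careful that freezing boundary vertices does not obstruct a vertex from later becoming exchangeable at a higher stage, and that the grading restricts correctly at each stage; the generalised notion of full subseed (allowing $\Sigma_i$-frozen vertices to be $\Sigma_{j}$-exchangeable) is what makes the construction go through, and is presumably why the authors flagged that generalisation relative to \cite{ADS}.
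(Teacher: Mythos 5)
Your proposal is correct and follows essentially the same route as the paper: reduce to a connected seed via coproducts, build an exhaustive chain of finite full subseeds whose exchangeable part lags one neighbourhood behind the vertex set (so that consecutive subseeds, and each subseed and $\Sigma$, are connected only by coefficients), apply Theorem~\ref{T:inclusion morphism} to get the linear system, and verify the universal property by observing that every cluster variable, relation and finite admissible sequence already lives in some $\CC(\Sigma_i)$. The only cosmetic difference is that the paper grows the $\var_i$ by iterated neighbourhoods of a single starting vertex and uses connectedness for exhaustion, where you use an explicit enumeration of the countable vertex set.
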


\begin{proof}
Let $\CC(\Sigma)$ be a rooted cluster algebra with initial seed $\Sigma = (M, \tilde{B}, G, \var, \ex, \inv)$. Since we can write any cluster algebra as a coproduct of connected cluster algebras, without loss of generality we can assume that $\Sigma$ is connected.

We construct a linear system of rooted cluster algebras as follows. Pick $i_0 \in \var\setminus \ex$ and set
\[
	\Sigma_0 = (M_0,[0],G_0, \var_0, \ex_0, \inv_0)
\]
with 
	\begin{itemize}
		\item $\var_0 = \{i_0\}$
		\item $\ex_0 = \emptyset$
		\item $\inv_0 = \begin{cases} \{ i_{0} \} & \text{if}\ i_{0}\in \inv \\ \emptyset & \text{otherwise} \end{cases}$
		\item $M_0 \colon \mathbb{Z}^{\var_0} \to \curly{F}_0$, where $\curly{F}_0$ is the subdivision algebra of $\curly{F}$ generated by $M(e_{i_0})$ and $M_0(1) = M(e_{i_0})$
		\item $G_0 = G_{i_0}$. 
	\end{itemize}
Inductively define full subseeds $\Sigma_i$ of $\Sigma$ by setting
\[
	\Sigma_{i+1} = (M_{i+1},\tilde{B}^{i+1},G_{i+1}, \var_{i+1}, \ex_{i+1}, \inv_{i+1}),
\]
where
	\begin{itemize}
		\item $\var_{i+1} = \var_i \cup \{l \in \var \mid \tilde{B}_{kl} \neq 0 \text{ for some } k \in \var_i\}$
		\item $\ex_{i+1} = \var_i \cap \ex$
		\item $\inv_{i+1} = \var_i \cap \inv$
		\item $M_{i+1} \colon \mathbb{Z}^{\var_{i+1}} \to \curly{F}_{i+1}$, where $\curly{F}_{i+1}$ is the subdivision algebra of $\curly{F}$ generated by $\{M(e_{k})\mid k \in \var_{i+1}\}$ and 
			$M_{i+1}(a) = M(a)$ for all $a \in \mathbb{Z}^{\var_{i+1}}$, where we can view $a$ as a vector in $\mathbb{Z}^{\var}$ by filling up the vector with zeroes.
		\item $\tilde{B}^{i+1} = \tilde{B} \mid_{\var_{i+1}}^{\var_{i+1}}$
		\item $(G_{i+1}) = G \mid_{\var_{i+1}}$. 
	\end{itemize}
Note that because $\tilde{B}$ is sign-skew-symmetric, $\tilde{B}_{kl} \neq 0$ is equivalent to $\tilde{B}_{lk} \neq 0$. 

It is routine to check that $\Sigma_{i}$ is a graded quantum seed.  The required properties are inherited from those of $\Sigma$, noting that we implicitly define the chain of subalgebras $\curly{F}_{0} \subseteq \curly{F}_{1} \subseteq \dotsm$ such that the toric frame conditions hold.

The key point is to observe that since these seeds are full subseeds, for $k\in \ex_{i}$ we have $\supp(\tilde{B}^{i}_{k})=\supp(\tilde{B}_{k})$, that is, at any point where we need to verify a compatibility condition with the exchange matrix, the restricted exchange matrix differs from the full one only by the removal of zeroes outside the relevant indexing set.

Because $\tilde{B}$ is locally finite, for all $i \geq 0$ the indexing set $\var_i$ in the seed $\Sigma_i$ is finite.
By definition, the seed $\Sigma_i$ is a full subseed of the seed $\Sigma_{i+1}$ for all $i \geq 0$ and all the seeds $\Sigma_i$ are full subseeds of $\Sigma$. 

\vfill
\pagebreak
We now want to show that for all $0 \leq i < j$ the seeds $\Sigma_i$ and $\Sigma_j$ are connected only by coefficients of $\Sigma_i$. Let $k \in \ex_i$ with $\tilde{B}^j_{kl} \neq 0$. Since $k \in \ex_i \subseteq \var_{i-1}$ and $\tilde{B}^j_{kl} = \tilde{B}_{kl}$ it follows that $l \in \{t \in \var \mid \tilde{B}_{tm} \neq 0 \text{ for some } m \in \var_{i-1}\} \subseteq \var_i$ and thus the seeds $\Sigma_i$ and $\Sigma_j$ are only connected by coefficients in $\Sigma_i$. Analogously one shows that the seeds $\Sigma_i$ and $\Sigma$ are only connected by coefficients in $\Sigma_i$.

By Theorem \ref{T:inclusion morphism} for $0 \leq i \leq j$, the natural inclusion $f_{ij} \colon \var_i \to \var_j$ gives rise to a rooted cluster morphism $f_{ij} \colon \CC(\Sigma_i) \to \CC(\Sigma_j)$. For all $0 \leq i \leq j \leq k$ we have $f_{jk} \circ f_{ij} = f_{ik}$ and $f_{ii} = \id_{\CC(\Sigma_i)}$, so the morphisms form a linear system of rooted cluster algebras of finite rank. Further, again by Theorem \ref{T:inclusion morphism}, for $i \geq 0$ the natural inclusion $f_i \colon\var_i \to \var$ gives rise to a rooted cluster morphism $f_i \colon\CC(\Sigma_i) \to \CC(\Sigma)$. We show that $\CC(\Sigma)$ together with the maps $f_i \colon \CC(\Sigma_i) \to \CC(\Sigma)$ for $i \geq 0$ is in fact the colimit of this linear system in the category of rooted cluster algebras. 

Because we assumed $\Sigma$ to be connected and the toric frames agree, we have $\cl(\Sigma) = \bigcup_{i \geq 0} \cl(\Sigma_i)$. Since every exchange relation in $\CC(\Sigma)$ is an exchange relation in $\CC(\Sigma_i)$ for all $i$ large enough (by virtue of the exchange matrices $\tilde{B}^i$ being arbitrarily large restrictions of the exchange matrix $\tilde{B}$ and the toric frames agreeing), any fixed element of $\CC(\Sigma)$ is contained in $\CC(\Sigma_i)$ for all $i$ sufficiently large.

Let $\Sigma'= (M', \tilde{B}', G', \var', \ex', \inv')$ be a seed such that for all $i \geq 0$ there are rooted cluster morphisms $g_i \colon \CC(\Sigma_i) \to \CC(\Sigma')$ compatible with the linear system $f_{ij} \colon \CC(\Sigma_i) \to \CC(\Sigma_j)$. 
We define a $\mathbb{K}$-algebra homomorphism $f \colon \CC(\Sigma) \to \CC({\Sigma'})$ by $f(x) = g_i(x)$, whenever $x \in \CC(\Sigma_i)$, i.e.\ it is the unique $\mathbb{K}$-algebra homomorphism making the following diagram commute.

\[
	\xymatrix{
	&\CC({\Sigma'})& \\
	& \CC(\Sigma) \ar@{-->}[u]^f& \\
	\CC(\Sigma_i) \ar[ruu]^{g_i} \ar[ru]_{f_i} \ar[rr]^{f_{ij}} && \CC(\Sigma_j) \ar[luu]_{g_j} \ar[lu]^{f_j}
}
\]
Since the morphisms $g_i$ all respect the gradings of $\CC(\Sigma_i)$, and these are compatible with the grading of $\CC(\Sigma)$, it follows that $f$ is a graded $\mathbb{K}$-algebra homomorphism.

For every $x \in \cl(\Sigma)$, there exists a $k \geq 0$ such that $x \in \cl(\Sigma_i)$. Thus $f(x) = g_i(x)$ for all $i \geq k$ lies in $\cl(\Sigma')$, because $g_i$ is a rooted cluster morphism for all $i \geq 0$. Thus the ring homomorphism $f$ satisfies axiom \ref{CM1}, and analogously axiom \ref{CM2prime}. Let now $(x_1, \ldots, x_l)$ be a $(f,\Sigma,{\Sigma'})$-biadmissible sequence and let $y \in \cl(\Sigma)$ such that $f(y) \in \cl(\Sigma')$. Then there exists an $i \geq 0$ such that $y \in \cl(\Sigma_i)$ and the sequence $(x_1, \ldots, x_l)$ is $(g_i,\Sigma_i,\Sigma')$-biadmissible. Thus we get 
\begin{align*}
	f(\mu_{x_l} \circ \ldots \circ \mu_{x_1}(y)) &= f \circ f_i(\mu_{x_l} \circ \ldots \circ \mu_{x_1}(y))\\
	&= g_i(\mu_{x_l} \circ \ldots \circ \mu_{x_1}(M(e_i)))= \mu_{g_i(x_l)} \circ \ldots \circ \mu_{g_i(x_1)}(g_i(y)))\\
	&= \mu_{f(x_l)} \circ \ldots \circ \mu_{f(x_1)}(f(y)).
\end{align*}
Therefore the ring homomorphism $f$ satisfies \ref{CM3} and is a rooted cluster morphism. Thus $\CC(\Sigma)$ satisfies the required universal property.
\end{proof}

\section{Infinite versions of homogeneous coordinate rings of Grassmannians via the Pl\"ucker embedding}

Let $[a,b]=\{ i \mid a\leq i \leq b \}$ with the convention that $[a,b]=\emptyset$ if $b<a$.  Fix $k$ and define 
\[ I_{ij} = [1,k-i] \sqcup [(k+j)-(i-1),k+j]. \]
Then the quantized coordinate ring over the Grassmannian $\mathcal{O}_{q}(\Gr(k,n))$ is a graded quantum cluster algebra over $\mathbb{K}$ with initial cluster
\[ \Sigma_{\Gr}(k,n)\stackrel{\mathrm{def}}{=} \{ \Delta_{q}^{I_{ij}} \mid (i,j)\in ([1,k]\times [1,n-k])\cup (0,0) \}. \]
Here $\Delta_{q}^{I_{ij}}$ denotes the quantum Pl\"{u}cker coordinate with indexing set $I_{ij}$; see \cite{GradedQCAs-Grkn} for detailed definitions.  

We use the indexing set $\var=([1,k]\times [1,n-k])\cup (0,0)$ for the cluster.  The mutable indices are then the subset $\ex=[1,k-1]\times [1,n-k-1]$.  We set $\inv=\emptyset$.  

Notice that for all $(i,j)\in \ex$, we have that $I_{ij} \cap \{n\}=\emptyset$; that is, no mutable Pl\"{u}cker coordinate in this cluster has an indexing set involving $n$.  This property will be key in what follows.

The exchange matrix for this cluster is the matrix associated to the quiver for which there is an arrow $(i_{1},j_{1})\to (i_{2},j_{2})$ if and only if the indices satisfy the following conditions:
\begin{enumerate}
\item $(i_{1},j_{1})=(0,0)$, $(i_{2},j_{2})=(1,1)$
\item $(i_{2},j_{2})=(i_{1}+1,j_{1})$
\item $(i_{2},j_{2})=(i_{1},j_{1}+1)$
\item $(i_{2},j_{2})=(i_{1}-1,j_{1}-1)$
\end{enumerate}
Here, we simply ignore any arrows whose tail index does not belong to $\mathbf{var}$ but we do allow arrows between frozen vertices.  These have no effect on the cluster algebra but are necessary for what follows.

This cluster is illustrated for $(k,n)=(3,7)$ and $(k,n)=(3,8)$ in Figure~\ref{fig:initialseed}.

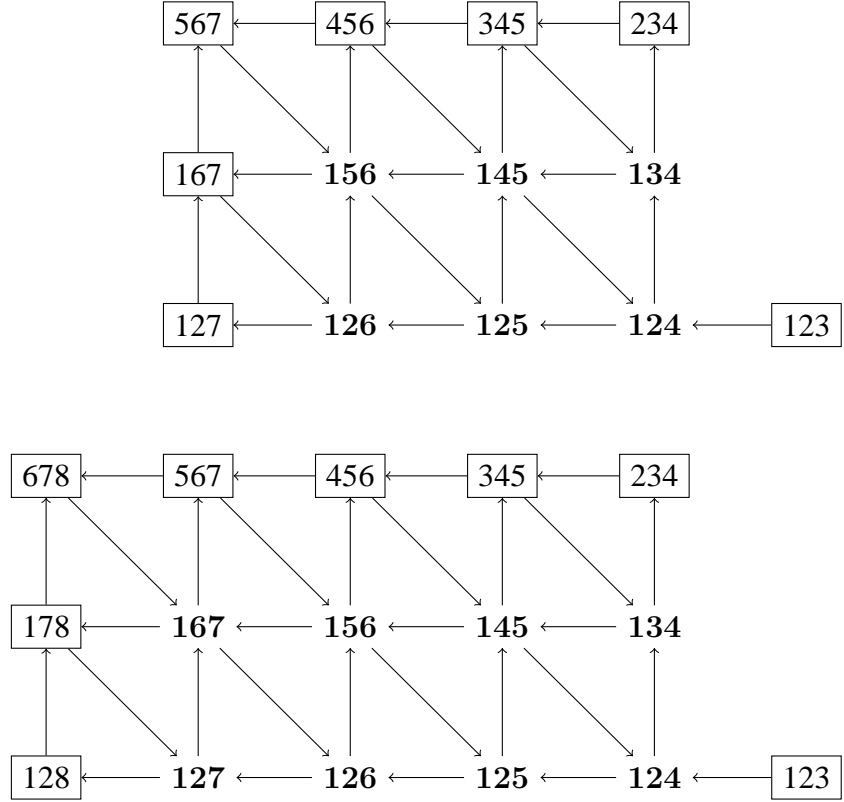
\begin{figure}
\begin{center}
{
\scalebox{1}{
\begin{tikzpicture}[on grid,node distance=2cm]

\begin{scope}[xshift=2cm]
\node (156) at (0,0) {$\mathbf{156}$};
\node (126) [below=of 156] {$\mathbf{126}$};
\node (145) [right=of 156] {$\mathbf{145}$};
\node (125) [below=of 145] {$\mathbf{125}$};
\node (134) [right=of 145] {$\mathbf{134}$};
\node (124) [below=of 134] {$\mathbf{124}$};
\node (123) [right=of 124,rectangle,draw=black]  {123};
\node (127) [left=of 126,rectangle,draw=black] {127};
\node (167) [left=of 156,rectangle,draw=black] {167};
\node (567) [above=of 167,rectangle,draw=black] {567};
\node (456) [above=of 156,rectangle,draw=black] {456};
\node (345) [above=of 145,rectangle,draw=black] {345};
\node (234) [above=of 134,rectangle,draw=black] {234};

\draw[->] (124) to (125);
\draw[->] (125) to (126);
\draw[->] (126) to (127);

\draw[->] (134) to (145);
\draw[->] (145) to (156);
\draw[->] (156) to (167);

\draw[->] (124) to (134);
\draw[->] (134) to (234);

\draw[->] (125) to (145);
\draw[->] (145) to (345);

\draw[->] (126) to (156);
\draw[->] (156) to (456);

\draw[->] (167) to (126);
\draw[->] (567) to (156);

\draw[->] (156) to (125);
\draw[->] (456) to (145);

\draw[->] (145) to (124);
\draw[->] (345) to (134);

\draw[->] (123) to (124);

\draw[->] (234) to (345);
\draw[->] (345) to (456);
\draw[->] (456) to (567);

\draw[->] (127) to (167);
\draw[->] (167) to (567);
\end{scope}

\begin{scope}[yshift=-6cm]
\node (167) at (0,0) {$\mathbf{167}$};
\node (127) [below=of 167] {$\mathbf{127}$};
\node (156) [right=of 167] {$\mathbf{156}$};
\node (126) [below=of 156] {$\mathbf{126}$};
\node (145) [right=of 156] {$\mathbf{145}$};
\node (125) [below=of 145] {$\mathbf{125}$};
\node (134) [right=of 145] {$\mathbf{134}$};
\node (124) [below=of 134] {$\mathbf{124}$};
\node (123) [right=of 124,rectangle,draw=black]  {123};
\node (128) [left=of 127,rectangle,draw=black] {128};
\node (178) [left=of 167,rectangle,draw=black] {178};
\node (678) [above=of 178,rectangle,draw=black] {678};
\node (567) [above=of 167,rectangle,draw=black] {567};
\node (456) [above=of 156,rectangle,draw=black] {456};
\node (345) [above=of 145,rectangle,draw=black] {345};
\node (234) [above=of 134,rectangle,draw=black] {234};

\draw[->] (123) to (124);
\draw[->] (124) to (125);
\draw[->] (125) to (126);
\draw[->] (126) to (127);
\draw[->] (127) to (128);

\draw[->] (134) to (145);
\draw[->] (145) to (156);
\draw[->] (156) to (167);
\draw[->] (167) to (178);

\draw[->] (145) to (124);
\draw[->] (156) to (125);
\draw[->] (167) to (126);

\draw[->] (124) to (134);
\draw[->] (125) to (145);
\draw[->] (126) to (156);
\draw[->] (127) to (167);

\draw[->] (134) to (234);
\draw[->] (145) to (345);
\draw[->] (156) to (456);
\draw[->] (167) to (567);

\draw[->] (178) to (127);
\draw[->] (678) to (167);
\draw[->] (567) to (156);
\draw[->] (456) to (145);
\draw[->] (345) to (134);

\draw[->] (234) to (345);
\draw[->] (345) to (456);
\draw[->] (456) to (567);
\draw[->] (567) to (678);

\draw[->] (128) to (178);
\draw[->] (178) to (678);

\end{scope}

\end{tikzpicture}
}
}
\end{center}
\caption{Initial clusters for quantum cluster algebra structures on $\mathcal{O}_{q}(\Gr(3,7))$ (upper) and $\mathcal{O}_{q}(\Gr(3,8))$ (lower).\label{fig:initialseed}}
\end{figure}

The grading we choose is the $\mathbb{Z}$-grading $G=\mathbf{1}$, i.e.\ $G_{i}=1$ for all $1\leq i\leq |\mathbf{var}|$.  

We will not need to explicitly describe the associated toric frame: it is fully determined by the quasi-commutation rules for quantum Pl\"{u}cker coordinates described by Scott (\cite{Scott-QMinors}).

Then we have the following proposition.

\begin{proposition} Let $\iota_{n}\colon \mathcal{O}_{q}(\Gr(k,n))\to \mathcal{O}_{q}(\Gr(k,n+1))$, $\iota_{n}(\Delta_{q}^{I})=\Delta_{q}^{I}$, be the natural embedding.  Then $\iota_{n}\colon (\mathcal{O}_{q}(\Gr(k,n)),\Sigma_{\Gr}(k,n)) \to (\mathcal{O}_{q}(\Gr(k,n+1)),\Sigma_{\Gr}(k,n+1))$ is a rooted (graded quantum) cluster morphism.
\end{proposition}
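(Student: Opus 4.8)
The plan is to obtain the statement as a direct application of Theorem~\ref{T:inclusion morphism}: I will show that $\Sigma_{\Gr}(k,n)$ is a full subseed of $\Sigma_{\Gr}(k,n+1)$, that the two are connected only by coefficients in $\Sigma_{\Gr}(k,n)$, and that the rooted cluster morphism produced by that theorem is precisely $\iota_{n}$. Throughout, write $\var^{(n)}=([1,k]\times [1,n-k])\cup \{(0,0)\}$ and $\ex^{(n)}=[1,k-1]\times [1,n-k-1]$ for the index sets of $\Sigma_{\Gr}(k,n)$; under the evident inclusion $\var^{(n)}\hookrightarrow \var^{(n+1)}$ the complement is the single extra column $\var^{(n+1)}\setminus \var^{(n)}=[1,k]\times \{n-k+1\}$.

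First I would check the conditions defining a full subseed. The inclusions $\var^{(n)}\subseteq \var^{(n+1)}$ and $\ex^{(n)}\subseteq \ex^{(n+1)}$ are immediate and $\inv=\emptyset$ on both sides; note that the old last column, frozen in $\Sigma_{\Gr}(k,n)$, becomes mutable in $\Sigma_{\Gr}(k,n+1)$, which is exactly the phenomenon accommodated by our more general notion of full subseed. The inclusion $\curly{F}\subseteq \curly{F}'$ of ambient skew-fields is the one through which $\iota_{n}$ is defined. The arrow rules (1)--(4) describing the exchange quiver are phrased purely in terms of grid-adjacency of the pairs $(i,j)$ and do not refer to $n$, so the full subquiver of the quiver of $\Sigma_{\Gr}(k,n+1)$ on the vertex set $\var^{(n)}$ is exactly the quiver of $\Sigma_{\Gr}(k,n)$, giving the exchange-matrix condition. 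Since the index sets $I_{ij}$ do not depend on $n$ and $\iota_{n}(\Delta_{q}^{I_{ij}})=\Delta_{q}^{I_{ij}}$, the toric frames of the two seeds agree on their common basis vectors; and because $\iota_{n}$ is an algebra homomorphism, for $(i,j),(a,b)\in \var^{(n)}$ the quasi-commutation scalar relating $\Delta_{q}^{I_{ij}}$ and $\Delta_{q}^{I_{ab}}$ is the same in $\mathcal{O}_{q}(\Gr(k,n))$ and in $\mathcal{O}_{q}(\Gr(k,n+1))$, so $\mathbf{r}(M_{n+1})\mid_{\var^{(n)}}^{\var^{(n)}}=\mathbf{r}(M_{n})$. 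As the value $M(a)$ of a toric frame on any $a$ equals $\curly{S}_{\mathbf{r}(M)}(a)$ times an ordered product of the $M(e_{i})^{a_{i}}$, these two facts give the toric-frame condition. Finally, both gradings are the all-ones grading, so the grading condition holds as well.

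Next I would verify the crucial extra hypothesis: that the two seeds are connected only by coefficients in $\Sigma_{\Gr}(k,n)$, i.e.\ that no arrow of the quiver of $\Sigma_{\Gr}(k,n+1)$ joins a mutable vertex $(i,j)\in \ex^{(n)}$ to a new vertex in $\var^{(n+1)}\setminus \var^{(n)}$. This is exactly where the property singled out earlier---that $I_{ij}\cap \{n\}=\emptyset$ for all $(i,j)\in \ex^{(n)}$---is used: a mutable vertex of $\Sigma_{\Gr}(k,n)$ has column index $j\le n-k-1$, whereas every new vertex lies in column $n-k+1$, while by the rules (1)--(4) any vertex joined to $(i,j)$ by an arrow lies in column $j-1$, $j$ or $j+1$, hence in a column at most $n-k$; so no such arrow exists. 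Equivalently, each mutable quantum Pl\"{u}cker coordinate of $\Sigma_{\Gr}(k,n)$ has all its indices strictly below $n$, whereas every new quantum Pl\"{u}cker coordinate involves $n+1$.

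With both hypotheses established, Theorem~\ref{T:inclusion morphism} supplies a rooted cluster morphism $f\colon \CC(\Sigma_{\Gr}(k,n))\to \CC(\Sigma_{\Gr}(k,n+1))$ realised by the inclusion $\curly{F}\hookrightarrow \curly{F}'$; since $f$ fixes every quantum Pl\"{u}cker coordinate and these generate $\mathcal{O}_{q}(\Gr(k,n))$, it coincides with $\iota_{n}$, which completes the proof. I expect the only point requiring genuine care to be the toric-frame condition: one must be sure that $\iota_{n}$ preserves the quasi-commutation relations among the Pl\"{u}cker coordinates \emph{exactly}, so that the toric frames---and not merely the underlying cluster algebras---match up. This is precisely where the rigidity of the noncommutative setting emphasised earlier comes into play, but it causes no trouble here because $\iota_{n}$ is assumed to be an algebra homomorphism and Scott's quasi-commutation rules depend only on the index sets involved.
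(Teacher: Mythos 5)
Your proposal is correct and follows exactly the route the paper takes: the paper's own proof simply asserts that $\Sigma_{\Gr}(k,n)$ is a full subseed of $\Sigma_{\Gr}(k,n+1)$ connected only by coefficients and invokes Theorem~\ref{T:inclusion morphism}, while you supply the (accurate) verifications the paper leaves as ``straightforward to check.'' In particular your column-index argument for the ``connected only by coefficients'' condition is the right way to make precise the paper's remark that $I_{ij}\cap\{n\}=\emptyset$ for mutable $(i,j)$ is the key property.
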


\begin{proof} It is straightforward to check that $\Sigma_{\Gr}(k,n)$ is a full subseed of $\Sigma_{\Gr}(k,n+1)$, connected only by coefficients.  Then the result follows immediately from Theorem~\ref{T:inclusion morphism}.
\end{proof}

The map $\iota_{7}\colon \mathcal{O}_{q}(\Gr(3,7))\to \mathcal{O}_{q}(\Gr(3,8))$ can be visualised via Figure~\ref{fig:initialseed}.  In particular no exchangeable variable from $\mathcal{O}_{q}(\Gr(3,7))$, considered in $\mathcal{O}_{q}(\Gr(3,8))$ is connected to the ``new'' variables indexed by $128$, $178$ and $678$, this being the ``connected only by coefficients'' condition.

Then the main colimit theorem above yields the following.

\begin{theorem} The family of rooted cluster morphisms $\{\iota_{n} \mid n\geq 1\}$ has a colimit, a rooted graded quantum cluster algebra which we denote by $\mathcal{O}_{q}(\Gr(k,\infty))$.  

This algebra is generated by the set $\{ \Delta_{q}^{I} \mid I\subset \mathbb{N}, |I|=k \}$ and has as relations all (higher) quantum Pl\"{u}cker relations.
\end{theorem}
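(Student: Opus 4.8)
The plan is to mirror the proof of Theorem~\ref{T:connected colimit}: exhibit an explicit candidate for the colimit and verify its universal property. First I would assemble the infinite seed $\Sigma_{\Gr}(k,\infty)=(M_{\infty},\tilde{B}_{\infty},\mathbf{1},\var_{\infty},\ex_{\infty},\emptyset)$ with $\var_{\infty}=([1,k]\times\mathbb{N})\cup\{(0,0)\}$, $\ex_{\infty}=[1,k-1]\times\mathbb{N}$, with $\tilde{B}_{\infty}$ the exchange matrix of the quiver obtained by imposing the four adjacency rules (1)--(4) on all of $\var_{\infty}$, with $M_{\infty}$ the toric frame recording Scott's quasi-commutation rules among all the quantum Pl\"ucker coordinates $\Delta_{q}^{I_{ij}}$, $(i,j)\in\var_{\infty}$, and with the all-ones $\mathbb{Z}$-grading. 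Since the quiver is locally finite, $\tilde{B}_{\infty}$ is a genuine (locally finite) exchange matrix; compatibility of $(\tilde{B}_{\infty},\mathbf{r}(M_{\infty}))$, skew-symmetrizability of its principal part, and $\tilde{B}_{\infty}^{T}\mathbf{1}=0$ are all finitary conditions, so they are inherited from the already-known seeds $\Sigma_{\Gr}(k,n)$. Hence $\Sigma_{\Gr}(k,\infty)$ is a graded quantum seed, and as in the proof of Theorem~\ref{T:connected colimit} (where one checks $\cl(\Sigma)=\bigcup_{i}\cl(\Sigma_{i})$ and, more generally, that any fixed element of $\CC(\Sigma)$ already lies in some finite stage) one gets $\CC(\Sigma_{\Gr}(k,\infty))=\bigcup_{n}\mathcal{O}_{q}(\Gr(k,n))$; this union I would take as the definition of $\mathcal{O}_{q}(\Gr(k,\infty))$.

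Next I would observe that each $\Sigma_{\Gr}(k,n)$ is a full subseed of $\Sigma_{\Gr}(k,\infty)$, connected only by coefficients in $\Sigma_{\Gr}(k,n)$. This is immediate from the preceding Proposition: ``full subseed, connected only by coefficients'' is transitive, so $\Sigma_{\Gr}(k,n)$ is such a subseed of every $\Sigma_{\Gr}(k,m)$ with $m\geq n$, and the defining conditions pass to the nested union $\Sigma_{\Gr}(k,\infty)$; concretely, for $j\in\ex$ of $\Sigma_{\Gr}(k,n)$ all arrows of the quiver of $\Sigma_{\Gr}(k,\infty)$ incident to $j$ run between vertices of $\var_{n}$. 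Theorem~\ref{T:inclusion morphism} then provides rooted cluster morphisms $f_{n}\colon\mathcal{O}_{q}(\Gr(k,n))\to\mathcal{O}_{q}(\Gr(k,\infty))$, and $f_{n+1}\circ\iota_{n}=f_{n}$ holds by construction, so $\mathcal{O}_{q}(\Gr(k,\infty))$ together with the $f_{n}$ is a cocone over the linear system $\{\iota_{n}\}$.

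For the universal property I would repeat the last two paragraphs of the proof of Theorem~\ref{T:connected colimit} almost verbatim. Given a seed $\Sigma'$ and a family of rooted cluster morphisms $g_{n}\colon\mathcal{O}_{q}(\Gr(k,n))\to\CC(\Sigma')$ compatible with the $\iota_{n}$, every element of $\mathcal{O}_{q}(\Gr(k,\infty))$ lies in $\mathcal{O}_{q}(\Gr(k,n))$ for all $n$ sufficiently large, so $f(x):=g_{n}(x)$ is a well-defined, and necessarily unique, $\mathbb{K}$-algebra homomorphism $\mathcal{O}_{q}(\Gr(k,\infty))\to\CC(\Sigma')$; it is graded because the $g_{n}$ are, and axioms \ref{CM1}, \ref{CM2prime} and \ref{CM3} each involve only finitely many cluster variables and finitely many mutations, hence reduce to the corresponding property of some $g_{n}$. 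This identifies $\mathcal{O}_{q}(\Gr(k,\infty))$ with $\mathrm{colim}\{\iota_{n}\}$. (Alternatively, one can note that the linear system $\{\Sigma_{\Gr}(k,n)\}_{n}$ is mutually cofinal with the exhaustion produced by the proof of Theorem~\ref{T:connected colimit} for the seed $\Sigma_{\Gr}(k,\infty)$, and invoke invariance of colimits under cofinal subsystems.)

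Finally, for the presentation statement I would use that $\mathcal{O}_{q}(\Gr(k,n))$ is generated by $\{\Delta_{q}^{I}\mid I\subseteq[1,n],\,|I|=k\}$ subject precisely to the (higher) quantum Pl\"ucker relations among those $I$, as recorded in \cite{GradedQCAs-Grkn} and the references therein. Since the colimit algebra is literally the nested union of the $\mathcal{O}_{q}(\Gr(k,n))$ along the inclusions $\iota_{n}$, and each quantum Pl\"ucker relation involves only finitely many indices, a direct-limit-of-presentations argument gives that $\mathcal{O}_{q}(\Gr(k,\infty))$ is generated by $\{\Delta_{q}^{I}\mid I\subset\mathbb{N},\,|I|=k\}$ with defining relations all (higher) quantum Pl\"ucker relations. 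I expect this last step to be the only genuine obstacle: the colimit-in-$\Clus$ part is a routine adaptation of Theorem~\ref{T:connected colimit}, but one must cite the correct presentation of $\mathcal{O}_{q}(\Gr(k,n))$ and check that passing to the limit introduces no relations beyond those already present at finite stages --- which follows because each $\iota_{n}$ is an injective algebra homomorphism, so any relation among elements of a finite stage already holds at that stage.
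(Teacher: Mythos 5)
Your proposal is correct and follows essentially the same route as the paper: the paper's (very brief) proof likewise takes $\mathcal{O}_{q}(\Gr(k,\infty))$ to be the colimit of the algebras along the $\iota_{n}$, with the stated generators and relations, and then invokes Theorem~\ref{T:connected colimit} for the compatibility of this colimit with the cluster structures. You have merely unpacked the details (explicit infinite seed, full-subseed/connected-only-by-coefficients verification, cofinality, and the direct-limit-of-presentations step) that the paper leaves implicit.
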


\begin{proof} It is clear that $\mathcal{O}_{q}(\Gr(k,\infty))$ with the stated properties may be constructed as a colimit of algebras arising from the system $\{ \iota_{n} \}$.  Then Theorem~\ref{T:connected colimit} shows us that this colimit has precisely the colimit cluster structure, i.e.\ that the colimit is compatible with the cluster structures on each finite Grassmannian.
\end{proof}

This generalises to arbitrary $k$ the construction of the authors in \cite{Grabowski-Gratz} for $k=2$ (see also \cite[\S 4.4]{Gratz-Colimits}).\vspace{-0.9em}

\small

\bibliographystyle{amsplain}
\bibliography{biblio}\label{references}

\providecommand{\bysame}{\leavevmode\hbox to3em{\hrulefill}\thinspace}
\providecommand{\MR}{\relax\ifhmode\unskip\space\fi MR }
\providecommand{\MRhref}[2]{%
  \href{http://www.ams.org/mathscinet-getitem?mr=#1}{#2}
}
\providecommand{\href}[2]{#2}
\begin{thebibliography}{10}

\bibitem{ADS}
Ibrahim Assem, Gr{\'e}goire Dupont, and Ralf Schiffler, \emph{On a category of
  cluster algebras}, J. Pure Appl. Algebra \textbf{218} (2014), no.~3,
  553--582.

\bibitem{BFZ-CA3}
Arkady Berenstein, Sergey Fomin, and Andrei Zelevinsky, \emph{Cluster algebras.
  {III}. {U}pper bounds and double {B}ruhat cells}, Duke Math. J. \textbf{126}
  (2005), no.~1, 1--52.

\bibitem{BZ-QCA}
Arkady Berenstein and Andrei Zelevinsky, \emph{Quantum cluster algebras}, Adv.
  Math. \textbf{195} (2005), no.~2, 405--455.

\bibitem{FZ-CA1}
Sergey Fomin and Andrei Zelevinsky, \emph{Cluster algebras. {I}.
  {F}oundations}, J. Amer. Math. Soc. \textbf{15} (2002), no.~2, 497--529
  (electronic).

\bibitem{GoodearlYakimovQCA}
Ken~R. Goodearl and Milen Yakimov, \emph{Quantum cluster algebra structures on
  quantum nilpotent algebras}, Memoirs Amer. Math. Soc. (2013), to appear.

\bibitem{Gorsky}
Mikhail Gorsky, \emph{On {Y}oung diagrams, flips and cluster algebras of type
  {$A$}}, preprint posted at arXiv:1106.2458.

\bibitem{GradedClusterAlgebras}
Jan~E. Grabowski, \emph{Graded cluster algebras}, J. Algebraic Combin. (2014),
  to appear.

\bibitem{Grabowski-Gratz}
Jan~E. Grabowski and Sira Gratz, \emph{Cluster algebras of infinite rank}, J.
  Lond. Math. Soc. (2) \textbf{89} (2014), no.~2, 337--363, With an appendix by
  Michael Groechenig.

\bibitem{GradedQCAs-Grkn}
Jan~E. Grabowski and St{\'e}phane Launois, \emph{Graded quantum cluster
  algebras and an application to quantum {G}rassmannians}, Proc. Lond. Math.
  Soc. (3) \textbf{109} (2014), no.~3, 697--732.

\bibitem{GrabowskiLaunois-Lifting}
Jan~E. Grabowski and St\'{e}phane Launois, \emph{Lifting quantum cluster
  algebra structures}, in preparation, 2015.

\bibitem{Gratz-Colimits}
Sira Gratz, \emph{Cluster algebras of infinite rank as colimits}, Math. Z.
  (2014), to appear.

\bibitem{HL}
David Hernandez and Bernard Leclerc, \emph{Monoidal categorifications of
  cluster algebras of type {$A$} and {$D$}}, Symmetries, integrable systems and
  representations, Springer Proc. Math. Stat., vol.~40, Springer, Heidelberg,
  2013, pp.~175--193.

\bibitem{HJ}
Thorsten Holm and Peter J{\o}rgensen, \emph{On a cluster category of infinite
  {D}ynkin type, and the relation to triangulations of the infinity-gon}, Math.
  Z. \textbf{270} (2012), no.~1-2, 277--295.

\bibitem{IT:cyclic}
Kiyoshi Igusa and Gordana Todorov, \emph{Cluster categories coming from cyclic
  posets}, Comm. Algebra \textbf{43} (2015), no.~10, 4367--4402.

\bibitem{IT:continuous}
\bysame, \emph{Continuous cluster categories {I}}, Algebr. Represent. Theory
  \textbf{18} (2015), no.~1, 65--101.

\bibitem{Ndoune}
Ndoun\'{e} Ndoun\'{e}, \emph{Cluster algebras arising from the infinity-gon},
  preprint posted at arXiv:1306.6120.

\bibitem{Scott-QMinors}
Josh Scott, \emph{Quasi-commuting families of quantum minors}, J. Algebra
  \textbf{290} (2005), no.~1, 204--220.

\end{thebibliography}

\normalsize

\end{document}